\titleformat{\subsubsection}[runin]
	{\normalfont\normalsize\bfseries\filcenter}{\thesubsubsection.}{1 ex}{}
\newcommand*{\doi}[1]{doi: \href{https://dx.doi.org/#1}{\urlstyle{rm}\nolinkurl{#1}}}
\newcommand*{\arxiv}[1]{arXiv:  \href{https://arxiv.org/abs/#1}{\urlstyle{rm}\nolinkurl{#1}}}
\declaretheorem[within=section]{theorem}
\declaretheorem[sibling=theorem]{lemma}
\declaretheorem[sibling=theorem,name=Proposition]{prop}
\declaretheorem[sibling=theorem]{corollary}
\declaretheorem[style=remark,sibling=theorem,qed={$\diamondsuit$}]{remark}
\declaretheorem[sibling=theorem,style=definition,qed={$\blacksquare$}]{definition}
\declaretheorem[sibling=theorem]{question}
\newcommand\QQ{\mathbb{Q}}
\newcommand\RR{\mathbb{R}}
\newcommand{\defn}[1]{\emph{\color{blue} #1}} % Defined words in blue
\newcommand\p{{\bf p}}
\newcommand\rr{{\bf r}}
\newcommand\bl{{\bf l}}
\newcommand{\diff}{\mathrm{d}}
\begin{document}
\title{Rigidity for sticky disks}
\author{Robert Connelly\thanks{Department of Mathematics, Cornell University. \texttt{rc46@cornell.edu}. Partially supported by 
NSF grant DMS-1564493.}
\and Steven J. Gortler\thanks{School of Engineering and Applied Sciences, Harvard University. \texttt{sjg@cs.harvard.edu}. 
Partially supported by NSF grant DMS-1564473.}
\and Louis Theran\thanks{School of Mathematics and Statistics, University of St Andrews. \texttt{lst6@st-and.ac.uk}}}
\date{}
%\date{\currenttime\qquad \today}

\maketitle 
\begin{abstract}
We study the combinatorial and rigidity properties
of disk packings with generic radii. We show that
a packing of $n$ disks in the plane with generic 
radii cannot have more than $2n - 3$ pairs of 
disks in contact.  

The allowed motions of a 
packing preserve the disjointness of the 
disk interiors and tangency between pairs already in 
contact (modeling a collection of sticky disks).
We show that if a packing has generic radii, 
then the allowed motions are all rigid body 
motions if and only if the packing has exactly
$2n - 3$ contacts.
Our approach is to study the space of packings
with a fixed contact graph.  The main technical step
is to show that this space is a smooth manifold, 
which is done via a connection to the 
Cauchy-Alexandrov stress lemma.  

Our 
methods also apply to jamming problems, 
in which contacts are allowed to break during a 
motion.  We give a simple proof of a finite 
variant of a recent result of Connelly, et al. \cite{iso}
on the number of contacts in a jammed packing 
of disks with generic radii.

\end{abstract}

\section{Introduction}\label{sec:introduction}

In this paper we study existence and rigidity properties of packings of sticky disks
with fixed generic radii.

A (planar) \defn{packing} $P$ of $n\ge2 $ disks is a placement of the 
disks, with centers $\p = (\p_1, \ldots, \p_n)$
and fixed radii $\rr = (r_1, \ldots, r_n)$, 
in the Euclidean plane so that their interiors 
are disjoint.  The \defn{contact graph} of a packing is the 
graph that has one vertex for each disk and an edge 
between pairs of disks that are mutually tangent.
Figure~\ref{fig:prism} shows an example of a packing.

\paragraph{Sticky disk and framework rigidity}
A motion  of a packing, called
a \defn{flex}, is one  that preserves
the radii, 
the disjointness of the disk interiors,
as well as tangency between pairs of disks 
with a corresponding edge in 
the contact graph.  (The last condition makes the 
disks ``sticky''.)
A packing is \defn{rigid} when all its flexes
arise from rigid body motions; otherwise, it is \defn{flexible}.

\begin{figure}[htbp]
    \centering
    \includegraphics[width=0.4\textwidth]
    {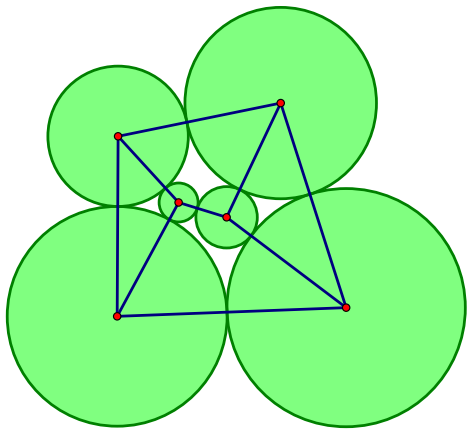}
    \caption{A rigid sticky disk packing and its underlying bar framework.  Disks 
    are the green circles with red center points / joints, the bars are blue segments.  }
    \label{fig:prism}
\end{figure}

Since any packing has a neighborhood on which 
the contact graph remains fixed along any flex (one must move at least some distance before a new contact can appear),
the constraints on the packing are locally equivalent 
to preserving the pairwise distances between the 
circle centers. 
Forgetting that the disks have radii and must remain disjoint and 
keeping only the distance contraints between the centers, we get exactly 
``framework rigidity'', where we have a \defn{configuration} 
$\p = (\p_1, \ldots, \p_n)$ 
of $n$ points in a $d$-dimensional Euclidean space 
and a graph $G$; the pair $(G,\p)$ is called a 
\defn{(bar-and-joint) framework}.
The allowed flexes of the points are those that preserve
the distances between the pairs indexed by the edges of $G$.
As with packings, a framework is \defn{rigid} when all its flexes
arise from rigid body motions and otherwise \defn{flexible}.  Given 
a packing $(\p,\rr)$ with contact graph $G$, we call the framework
$(G,\p)$ its \defn{underlying framework}.

\paragraph{Motivations}
Rigidity of sticky disk packings, and the relationship to 
frameworks, has several related, but formally different 
motivations.  
The first comes from the study of 
colloidal matter \cite{mahanoran-science}, which is
made of micrometer-sized particles that interact 
with ``short range potentials'' \cite{PhysRevLett.103.118303,Meng560}
that, in a limit, behave like sticky disks \cite{miranda-energy-landscape} (spheres 
in $3$d).

Secondly, there is a connection 
to jammed packings.  Here one has a 
``container'', which can shrink uniformly and push the disks together.
In this setting, we do not 
require any contact graph to be preserved. 
A fundamental problem is to understand the geometry and combinatorics 
of maximally dense packings (where the container can
shrink no more---full definitions 
will be given in Section \ref{sec:jam}).
In a series of papers, Connelly and co-workers \cite{C08,con88,DONEV2004139}
relate configurations that are 
locally maximally dense to 
the rigidity of a related tensegrity (see, e.g., \cite{W16}) over the
contact graph. 
Notably, the recent work in \cite{iso} proves
results about the number of contacts appearing in 
such locally maximally dense packings under appropriate genericity assumptions.

Another motivation comes from geometric constraint solving \cite{meera-EASAL},
where combinatorial methods are also applied to structures made of disks and 
spheres.

\paragraph{Laman's Theorem}
Given the relationship between disk packings  
and associated frameworks, it is very tempting to go further
and apply the methods of \textit{combinatorial} rigidity (see, e.g., 
\cite{zbMATH06607902})
theory to infer geometric or physical properties from the contact 
graph alone.
Several 
recent works in the soft matter literature 
\cite{PhysRevLett.114.135501,LESTER2018225,PhysRevLett.116.028301} use such an approach.

The combinatorial approach is attractive 
because we have a 
very good understanding of framework rigidity in  dimension $2$, provided 
that $\p$ is not very degenerate.
\begin{definition}
A vector in $\RR^N$ is called \defn{generic} if its coordinates
are algebraically independent over $\QQ$.
A point configuration $\p$ of $n$ points in $\RR^d$
is called \defn{generic} 
if the coordinates of its points (a vector in $\RR^{dn}$)
are algebraically independent over $\QQ$.
\end{definition}
Almost all configurations are generic, so generic 
configurations capture the general case.  
Moreover, all the results in the present paper 
remain true if we simply  avoid the zero set of a 
specific but unspecified set of
polynomials with rational coefficients (i.e., the results
hold on a Zariski open subset, defined over $\QQ$,
of the appropriate configuration space).

The following combinatorial notion captures the 
$2$-dimensional case of a counting heuristic 
due to Maxwell \cite{M64}.
\begin{definition}
Let $G = (V,E)$ be a graph with $n$ vertices and $m$ edges.  A 
graph $G$ is \defn{Laman-sparse} if, for every subgraph 
on $n'$ vertices and $m'$ edges, $m' \le 2n' - 3$.  If, in 
addition, $m=2n-3$, $G$ is called a \defn{Laman graph}.
\end{definition}
The following theorem, which combines results of Asimow and Roth \cite{AR78}
and Laman \cite{L70} characterizes rigidity and flexibility of generic 
frameworks in the plane.
\begin{theorem}\label{thm: AR laman}
Let $G$ be a graph with $n$ vertices and $\p$ a generic 
configuration of $n$ points in dimension $2$. Then the framework
$(G,\p)$ is rigid if $G$ contains a Laman graph as a spanning 
subgraph and otherwise flexible.
\end{theorem}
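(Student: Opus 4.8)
The plan is to reduce this to two classical ingredients: the Asimow--Roth theorem identifying generic rigidity with infinitesimal rigidity, and Laman's combinatorial characterisation of generically rigid graphs. Write $R(G,\p)$ for the rigidity matrix, the $m \times 2n$ matrix whose row indexed by an edge $ij$ has the block entries $\p_i - \p_j$ in position $i$, $\p_j - \p_i$ in position $j$, and zeros elsewhere. Its kernel is the space of infinitesimal flexes of $(G,\p)$; when the points $\p_1,\dots,\p_n$ affinely span $\RR^2$ (which a generic configuration of $n \geq 2$ points does) the infinitesimal rigid motions form a $3$-dimensional subspace of $\kernel R(G,\p)$, so $(G,\p)$ is infinitesimally rigid precisely when $\rank R(G,\p) = 2n - 3$.

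First I would observe that $\rank R(G,\p)$, as a function of $\p$, is maximised on a Zariski-open subset of $\RR^{2n}$ defined over $\QQ$ (the complement of the common zero set of the maximal minors), and in particular attains its maximum $\rho(G)$ at every generic $\p$, a value depending only on $G$. The Asimow--Roth argument then shows that at such a maximal-rank point the edge-length level set is, locally, a smooth manifold of dimension $2n - \rho(G)$, hence agrees near $\p$ with the orbit of $\p$ under rigid body motions if and only if $\rho(G) = 2n - 3$. Thus for generic $\p$, the framework $(G,\p)$ is rigid if and only if $\rho(G) = 2n-3$.

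Next I would use that the linearly independent sets of rows of $R(G,\p)$, for generic $\p$, are the independent sets of a matroid on the edge set of $G$ (the two-dimensional generic rigidity matroid), so $\rho(G)$ equals the rank of $G$ in this matroid; in particular $\rho(G) = 2n - 3$ if and only if $G$ has a spanning subgraph with exactly $2n - 3$ edges that is independent in this matroid. It therefore remains to identify ``independent'' with ``Laman-sparse''. One half of that identification is easy counting: a subframework on $n'$ vertices contributes rows lying, modulo trivial flexes, in a space of dimension at most $2n' - 3$, so any edge set containing a subgraph with $m' > 2n' - 3$ edges is dependent; consequently a minimal generically rigid graph is Laman, and $\rho(G)=2n-3$ is equivalent to $G$ containing a spanning Laman subgraph.

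The remaining half --- every Laman graph is generically rigid --- is Laman's theorem proper, and is the main obstacle. I would prove it by induction via the Henneberg construction. The combinatorial input is that a Laman graph on $n \geq 3$ vertices has a vertex of degree $2$ or $3$, and that deleting a degree-$2$ vertex, or deleting a degree-$3$ vertex while adding one suitably chosen edge between two of its former neighbours, produces a smaller Laman graph; a short counting argument shows the required edge choice always exists. The geometric input is that the two inverse Henneberg moves preserve generic rigidity. For the degree-$2$ (vertex addition) move this is immediate: for a generic placement of the new vertex its two incident rows are visibly independent of one another and of all previous rows. The degree-$3$ (edge-split) move is the delicate step: one must check the rank increases by exactly $2$, which is done by placing the new vertex at one carefully chosen special position --- collinear with the edge being split --- where the rank can be computed by hand, and then invoking lower semicontinuity of $\rank$ to transfer the conclusion to a generic placement. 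Feeding this back through the previous steps proves the theorem, and, as claimed in the introduction, all exceptional configurations lie on the zero set of finitely many polynomials with rational coefficients.
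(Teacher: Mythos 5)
The paper does not prove this statement: it is quoted as background, with the analytic half attributed to Asimow--Roth \cite{AR78} and the combinatorial half to Laman \cite{L70}. Your proposal is a correct outline of exactly the proof contained in those references, so there is no divergence of approach to report --- you have simply reconstructed the cited argument. The architecture is sound: reduce rigidity at a generic configuration to the rank $\rho(G)$ of the rigidity matrix via the regular-value/level-set argument of Asimow--Roth, pass to the generic rigidity matroid so that $\rho(G)=2n-3$ becomes the existence of a spanning independent set of size $2n-3$, identify independence with Laman sparsity (the easy direction by counting, since a vertex isolated in a basis would force a sparsity violation on the remaining vertices), and prove the hard direction (Laman implies independent) by induction on Henneberg moves. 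The only place where you are sketching rather than proving is the $1$-extension (degree-$3$) step, which is indeed the crux of Laman's theorem; but the strategy you name --- place the new vertex collinear with the edge being split so that the row of that edge lies in the span of the two new rows, verify full rank there by hand, and use lower semicontinuity of rank to pass to a generic placement --- is the standard and correct way to close it. One cosmetic quibble: for $n=2$ the points do not affinely span $\RR^2$, but the space of restricted trivial flexes is still $3$-dimensional for any two distinct points, so your rank criterion holds in that case too.
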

Theorem \ref{thm: AR laman} makes generic rigidity and flexibility 
a combinatorial property that can be analyzed very efficiently
using graph-theoretic algorithms \cite{JH97,BJ03}, even on
large inputs.  Of course, one needs to make the 
modeling assumption that the process generating $\p$ 
is generic.  If not, then the use of combinatorial methods
is not formally justified.

In fact, the genericity\footnote{Or at least restricting $\p$ to a 
Zariski open set.}
hypothesis is essential: one may find (necessarily 
non-generic) $\p$ for which a framework $(G,\p)$ is flexible 
and $G$ is a Laman graph (see, e.g., \cite{WW83}); conversely, 
there are non-generic frameworks that are rigid but have too 
few edges to be rigid with generic $\p$ (see, e.g., \cite{CT95})%
\footnote{Many examples of both types are classically
known in the engineering literature.}.

\paragraph{Packing Laman question}
The starting point for this paper is the observation
that configurations arising from packings with 
disks in contact are not generic. Thus the theory of
generic bar frameworks does not apply directly to 
packings.

The most 
general situation for packings is the 
(extremely) ``polydisperse'' case in which 
the radii $r_i$ are algebraically independent
over $\QQ$.  Even with generic radii, the configuration of 
disk centers could be very 
degenerate relative to picking $\p$ freely.  For an 
edge $ij$ in $G$, the contact constraint is 
\begin{equation}\label{eq: contact constraint}
    \|\p_i - \p_j\| = r_i + r_j
\end{equation}
which means that there are only $n$ 
degrees of freedom (the radii) to pick the edge 
lengths, instead of the $2n-3$ available
to a general $(G,\p)$ with $G$ a Laman graph.
Thus  the underlying $\p$ of a disk packing with a Laman contact
graph is
certainly not generic, and
so Theorem \ref{thm: AR laman}
does not apply.  We need another formal justification 
for analyzing packings combinatorially.

\paragraph{Packing non-existence question}
Beyond rigidity, there is the general question of what
graphs can appear as the contact graph of a 
packing with generic radii.  Once we 
fix $\rr$, there are $2n-3$ non-trivial 
degrees of freedom in packing $\p$.
If a graph $G$ has $n$ vertices, each of 
its $m$ edges contributes a constraint of 
the type \eqref{eq: contact constraint}.
Thus, when $m > 2n - 3$, we expect, 
heuristically that either no $\p$ 
exists or $\rr$ satisfies some additional
polynomial relation.

\subsection{Main result}
Our main result is a variant of Theorem \ref{thm: AR laman}
for packings that answers both the 
rigidity and non-existence questions under the assumption
of generic radii.
To state our theorem, we need one more rigidity concept, which is a 
linearization of rigidity.
\begin{definition}\label{def: inf flex}
An \defn{infinitesimal flex} $\p'$ of a $d$-dimensional 
bar-and-joint framework $(G,\p)$
is an assignment of a vector $\p'_i\in \RR^d$ to each vertex of $G$ so that
for all edges  $ij$ of $G$,
\[
    (\p_j - \p_i)\cdot (\p'_j - \p'_i) = 0.
\]
There is always a $\binom{d+1}{2}$-dimensional space 
of \defn{trivial} infinitesimal flexes arising from 
Euclidean isometries.  A framework is \defn{infinitesimally
rigid} if all its infinitesimal flexes are trivial.  Infinitesimal
rigidity implies rigidity.

An infinitesimal flex for a disk packing is simply an 
infinitesimal flex of its  underlying
bar-and-joint framework.  A packing is infinitesimally rigid if its 
underlying bar-and-joint framework is.
\end{definition}
Since rigidity does not imply infinitesimal rigidity, infinitesimal 
rigidity is a more stringent condition to place on a framework than
rigidity.  Generically, however, the two concepts coincide \cite{AR78}.

\begin{figure}[htbp]
    \centering
    \includegraphics[width=0.4\textwidth]    {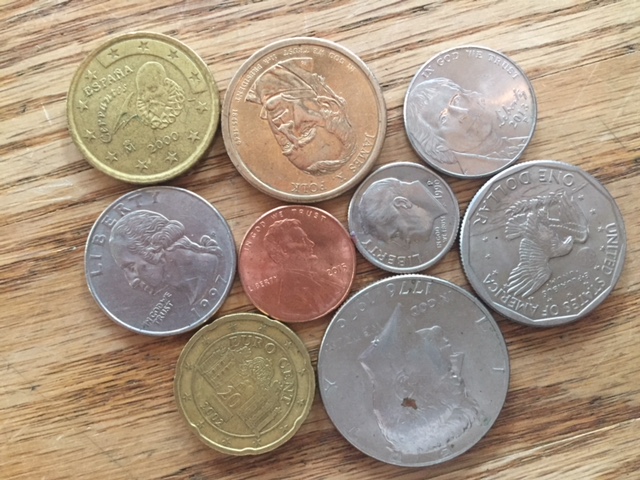}
    \caption{Nine disks behaving generically. There are fifteen contacts. One cannot create any more contacts. 
    One cannot deform the configuration without breaking a contact. }
    \label{fig:coins}
\end{figure}

Here is our main result:
\begin{theorem}\label{thm:main}
Let $P$ be a packing of $n$ disks in in the Euclidean plane
$\RR^2$ with generic radii (or even just
generic radii ratios).
Then the contact graph $G$ of $P$ has at most $2n-3$ edges and is 
Laman sparse and planar. 
Moreover, if $P$ has $2n-3$ 
contacts, 
it is rigid and infinitesimally rigid.
If the number of contacts is 
fewer, then $P$ is flexible and infinitesimally 
flexible.
\end{theorem}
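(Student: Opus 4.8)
The plan is to reduce everything to two facts: a combinatorial/algebraic statement about which contact graphs can occur for generic radii, and a genericity transfer statement that lets us import Theorem~\ref{thm: AR laman}. The first step is to observe that a disk packing with contact graph $G$ gives a point configuration $\p$ satisfying the polynomial system \eqref{eq: contact constraint}, one equation per edge, with the $n$ radii (or $n-1$ ratios) as parameters. I would set up the right space to work in: fix a combinatorial type $G$ and consider the \emph{packing configuration space} of all $(\p,\rr)$ (modulo Euclidean motions and scaling) realizing $G$ as a subgraph of the contact graph. The key technical input — which the abstract flags as the main technical step and attributes to a Cauchy--Alexandrov stress argument — is that this space is a smooth manifold of the expected dimension. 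Granting that, a dimension count gives the edge bound: the map sending a packing to its vector of radii has image of dimension at most $n$ (or $n-1$ for ratios), while a framework with $m$ edges on a generic-enough configuration would force $m$ independent length constraints; if $m > 2n-3$ the fibers would have to be positive-dimensional in a way incompatible with the manifold structure, forcing an algebraic dependence among the $r_i$, contradicting genericity. I expect the smooth-manifold statement to be the real obstacle; everything downstream is bookkeeping.

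For the sparsity and planarity clauses: Laman-sparsity of $G$ follows by applying the edge bound not just to $G$ but to every subgraph, since any subgraph $G'$ on $n'$ vertices is itself the contact graph of the sub-packing on those $n'$ disks, whose radii are still algebraically independent (a subset of an algebraically independent set is algebraically independent); hence $m' \le 2n'-3$. Planarity is essentially classical: the contact graph of any packing of disks in the plane is planar, by drawing each edge along the segment between the two tangent centers and checking these segments only meet at shared endpoints (two tangent disks touch at a single point on that segment, and disjoint interiors prevent crossings). This part does not even need genericity, so I would state it as a lemma and dispatch it quickly.

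For the rigidity dichotomy, I would argue as follows. Since $G$ is Laman-sparse with $n$ vertices, $G$ contains a spanning Laman subgraph if and only if $m = 2n-3$. So it suffices to show: (i) if $m < 2n-3$ then $P$ is infinitesimally flexible (hence flexible); and (ii) if $m = 2n-3$ then $P$ is infinitesimally rigid (hence rigid). Direction (i) is a pure dimension count on the rigidity matrix: it has $m < 2n-3$ rows, so its kernel has dimension at least $2n - m > 3$, exceeding the $3$-dimensional space of trivial flexes. Direction (ii) is where genericity of the \emph{radii} must substitute for genericity of $\p$: I would show that the rigidity matrix of $(G,\p)$ has rank $2n-3$ for the underlying configuration of a packing with generic radii and $m = 2n-3$ contacts. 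The natural route is again via the manifold: the packing configuration space for a Laman graph $G$ has dimension exactly $n$ after modding out motions and scale (parametrized by the radii), so the derivative of the constraint map has full rank $2n-3$ at such a packing, which is exactly the statement that the rigidity matrix has rank $2n-3$, i.e. infinitesimal rigidity. Equivalently, one shows the set of radius-vectors for which $(G,\p)$ is infinitesimally flexible is a proper algebraic subset, and generic $\rr$ avoids it; the smooth-manifold result is what guarantees this subset is proper (nonempty complement).

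The main obstacle, then, is establishing that the packing configuration space is a smooth manifold — in particular ruling out the degenerate possibility that the contact-preserving constraints, although fewer than $2n-3$ in number, nonetheless fail to be independent at a packing's configuration because the special structure of packing configurations ($\p$ is forced to be non-generic) creates an unexpected singularity. Naively this is exactly the scenario where combinatorial rigidity fails for non-generic frameworks, so some genuine geometric input is needed; the Cauchy--Alexandrov stress lemma (a self-stress of a framework whose bars all point ``outward'' from a convex-like configuration must vanish) is presumably used to certify that no nonzero equilibrium stress can be supported, which is dual to the constraints being independent. Once that lemma is in hand and phrased correctly for the packing setting, I would feed it into the implicit function theorem to get smoothness, and the rest of Theorem~\ref{thm:main} falls out by the counting and transfer arguments sketched above.
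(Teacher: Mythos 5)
Your architecture is the same as the paper's: show that the space $S_G$ of packings with contact graph $G$ is a smooth manifold of dimension $3n-m$ via a Cauchy--Alexandrov stress argument, then deduce the edge bound and the rigidity dichotomy by dimension counting against the radius projection $\pi:(\p,\rr)\mapsto\rr$. However, one step in your outline is logically backwards. In direction (i) you assert that $m<2n-3$ makes $P$ ``infinitesimally flexible (hence flexible).'' The implication runs the other way: infinitesimal rigidity implies rigidity, so flexibility implies infinitesimal flexibility, but an infinitesimally flexible framework can perfectly well be rigid --- Figure~\ref{fig:flex}(a) of the paper is exactly such an example. To get genuine flexibility you need the fiber argument: for generic $\rr$ the preimage $\pi^{-1}(\rr)$ is a smooth manifold of dimension $2n-m\ge 4$, which strictly exceeds the $3$-dimensional orbit of rigid motions, so there are nearby non-congruent packings with the same radii and contacts, hence a nontrivial flex. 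The paper deduces infinitesimal flexibility \emph{from} flexibility, not the reverse.

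The second soft spot is in direction (ii), where your primary sentence conflates two matrices. Smoothness of $S_G$ says the full $m\times 3n$ Jacobian (including the radius columns) has rank $m$ at \emph{every} point of $S_G$; it does not by itself say that the $m\times 2n$ rigidity matrix of the underlying framework $(G,\p)$ has rank $m$ at your particular $P$ --- a rank drop there is exactly the degenerate scenario you worry about in your last paragraph, and it can occur at non-generic fibers even over a smooth total space. The missing ingredient is that a generic $\rr$ is a \emph{regular value} of $\pi$, which requires a (semi-algebraic, $\QQ$-defined) version of Sard's theorem applied to $S_G$; only then is the kernel of $\diff\pi$ at $P$ --- which is precisely the space of infinitesimal flexes of $(G,\p)$ --- of dimension $(3n-m)-n=2n-m=3$. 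Your ``equivalently'' sentence gestures at this, but ``the smooth-manifold result guarantees the bad set is proper'' \emph{is} Sard and must be invoked as such, in a form guaranteeing the critical values form a proper subset defined over $\QQ$ so that algebraic independence of the radii is the right hypothesis to avoid it. With those two repairs your outline coincides with the paper's proof; the planarity and Laman-sparsity parts are fine as you state them.
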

Figure \ref{fig:coins} shows a ``real world'' example of a 
packing with radii that exhibit generic behavior.

We note that the upper bound on the number of contacts does not 
rely on any rigidity properties of $P$, and indeed, a similar 
upper bound will appear in Section \ref{sec:jam}, even though
the rigidity statement is different.

The rigidity characterization has an algorithmic consequence.
Since it only requires checking the total number of contacts,
generic rigidity of a graph known to be the contact graph of a 
disk packing with generic radii can be checked in linear time.
In contrast, for general graphs, the Laman sparsity condition 
must be verified for all subgraphs.  The best known algorithms
\cite{BJ03,JH97,gabow}
for this task have super-linear running times.

As with Theorem \ref{thm: AR laman}, genericity is essential to 
Theorem \ref{thm:main}.  
If all the radii are the same, 
the triangular lattice packing gives a packing with 
more than $2n-3$ contacts.  More generally, the Köbe-Andreev-Thurston
Theorem  \cite[Theorem 13.6.2]{gt3m} says that
any planar graph can appear as the contact graph of some packing
(which  necessarily 
has non generic radii when $m > 2n -3$).

\begin{figure}[htbp]
    \centering
    \subfigure[]{\includegraphics[width=0.35\textwidth]{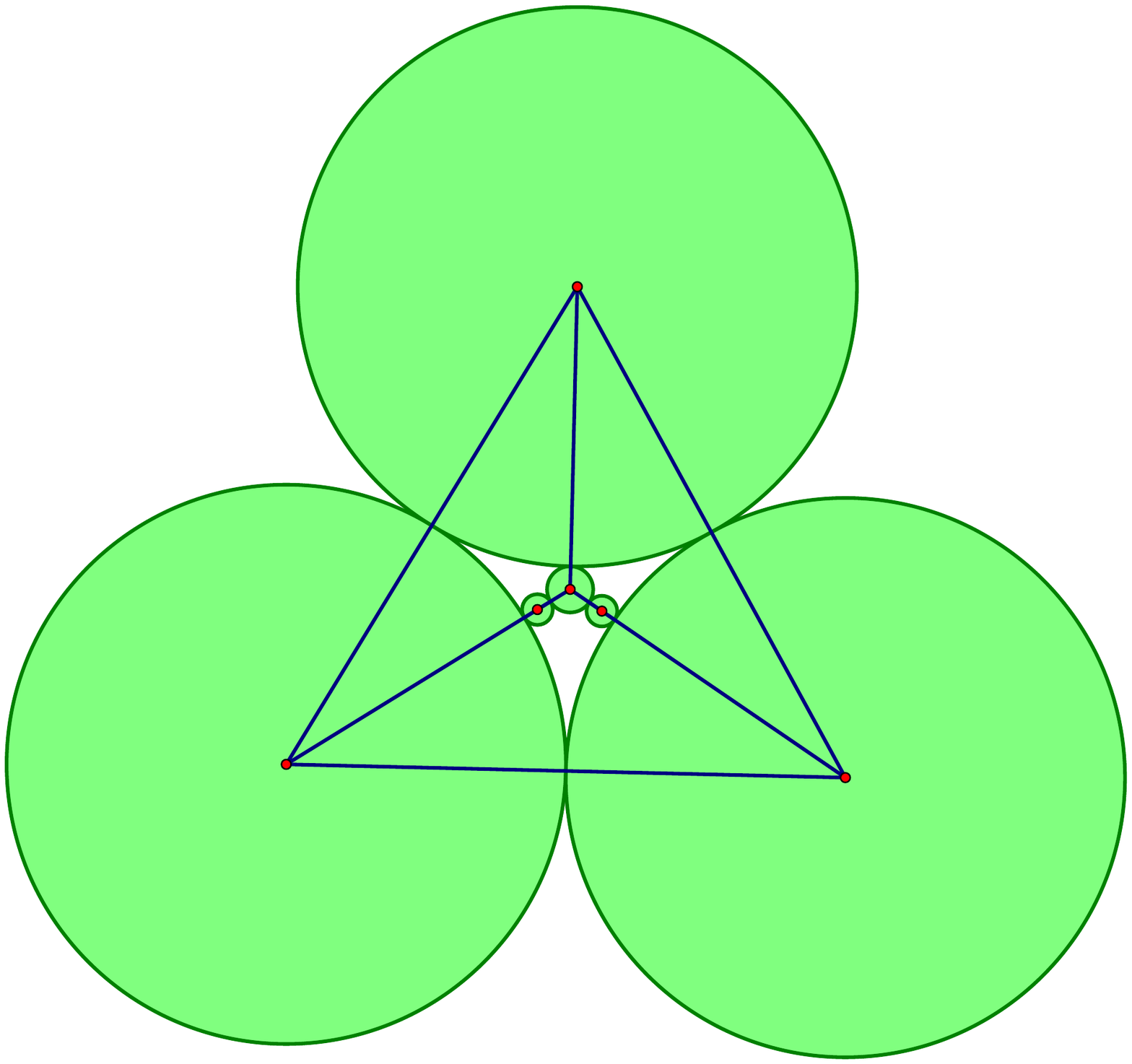}}
    \subfigure[]{\includegraphics[width=0.3\textwidth]{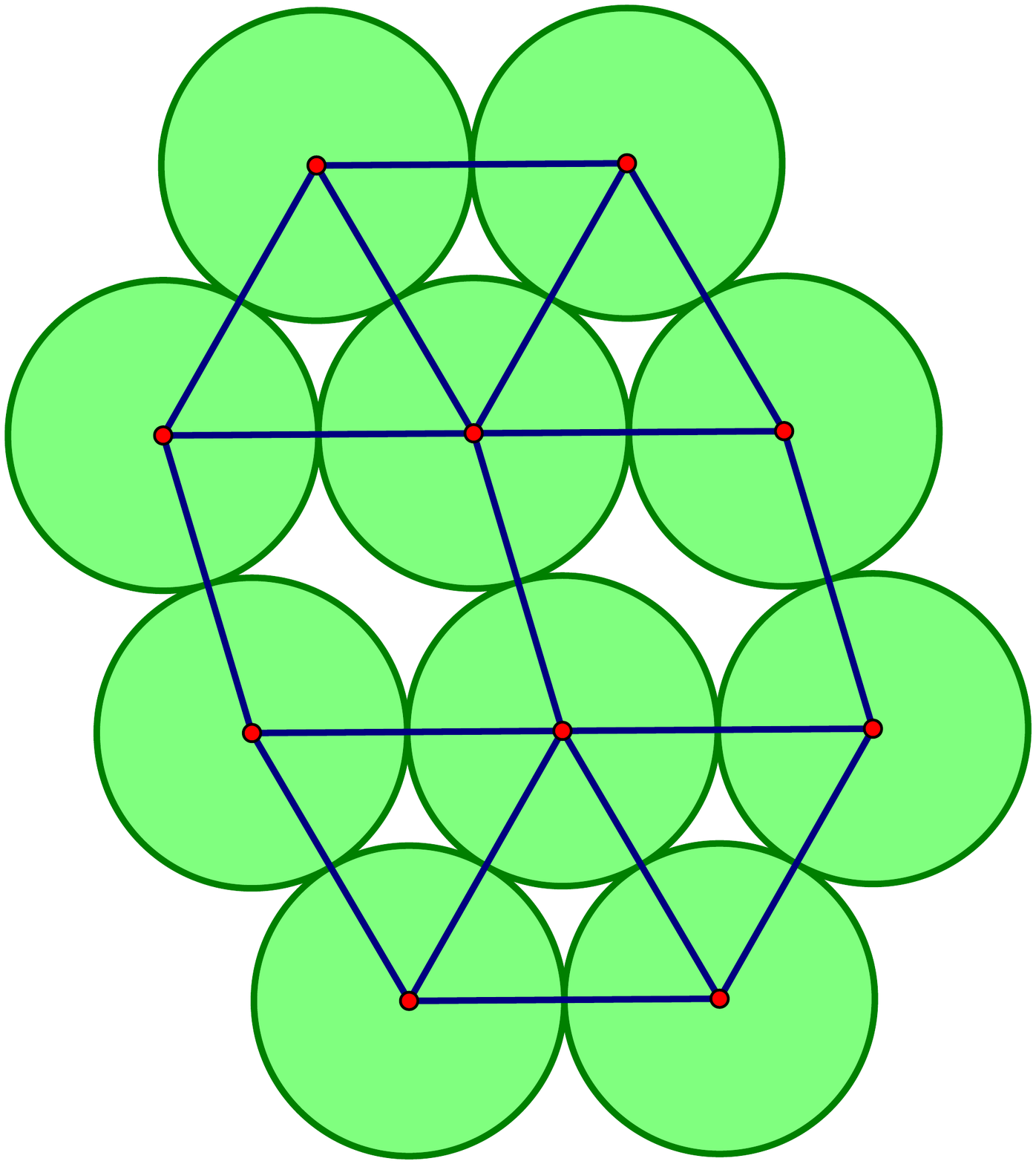}}
    \caption{
    (a) This packing of sticky disks with non-generic radii has 
    $< 2n-3$ contacts but is rigid (See \cite[Figure 8b]{CW-prestress}).
    (b) This packing of sticky disks with non-generic radii has 
    $2n-3$ contacts but is flexible.}
    \label{fig:flex}
\end{figure}

One can also construct non-generic examples of packings with fewer than 
$2n-3$ contacts that are rigid (see Figure~\ref{fig:flex}a) and at least $2n-3$ contacts that are 
flexible (see Figure~\ref{fig:flex}b).

In Section~\ref{sec:jam} we also provide a bonus result
characterizing the number of contacts that appear 
in a maximally jammed packing where the boundary is formed
by three touching large exterior disks.

\subsection{Other related work}
General questions about of whether combinatorial characterizations 
of rigid frameworks remain valid in the presence of special 
geometry have been addressed before.  Notably, our 
``packing Laman question'' is similar in flavor to the 
``Molecular conjecture'' of Tay and Whiteley \cite{TW84},
which was solved by Katoh and Tanigawa \cite{KT11}.

\section{The packing manifold}\label{sec: packing manifold}

To prove our theorem, we start by defining the
packing manifold of a contact graph.

\begin{definition}
Let $G$ be a graph with $n$ vertices and $m$ edges.
We think of a disk packing $(\p,\rr)$ as a point in $\RR^{3n}$.
Let $S_G \subset \RR^{3n}$ be the set of disk packings  
where (only) the edges 
in $G$ correspond
to the circle pairs that are in contact. 
$S_G$ is a semi-algebraic set defined over $\QQ$.
\end{definition}

Our  goal in this section is to prove that 
(when not empty)
$S_G$ is a smooth manifold
and of the expected dimension $3n - m$.

Let $P$ be a point of $S_G$.  Since exactly the pairs of disks corresponding to the
edges of $G$ are in contact, the constraints defining $S_G$ near $P$ are
$m$ equations of the type \eqref{eq: contact constraint}.  

We now compute the Jacobian matrix $M$ for these constraints at $P$. 
The matrix $M$ is $m$-by-$3n$, with one row per contact edge, and three 
columns corresponding to each vertex $i$ of $G$: two for $\p_i$ and 
one for $r_i$.

Differentiating \eqref{eq: contact constraint} with 
respect to the coordinates of 
$\p_i$, $\p_j$, $r_i$ and $r_j$ in turn, we find that
the row corresponding to an edge $ij$ of $G$
has the following pattern
\begin{equation}
\label{eq: contact jacobian}
       \bordermatrix{
        & \cdots & \p_i& \cdots & \p_j & \cdots & r_i & \cdots & r_j \cr 
        & \cdots\, 0\, \cdots & 2(\p_i - \p_j) &  \cdots\, 0\,\cdots & 
        2(\p_j - \p_i) &  \cdots\, 0\,\cdots & -2\|\p_i - \p_j\| &
         \cdots\, 0\,\cdots & -2\|\p_i - \p_j\|
    } 
\end{equation}
where the first row above labels the matrix columns.
Here, we used the fact that disks $i$ and $j$ are in contact to 
make the simplification 
\[
-2(r_i + r_j) = -2\|\p_i - \p_j\|
\]
in \eqref{eq: contact jacobian}.

To prove that $S_G$ is smooth, we only need to show that 
$M$ has rank $m$ at every 
point $P$.  We will establish this via a connection between 
row dependencies in $M$ and a specific kind of 
equilibrium stress in planar frameworks.
\begin{definition}
A \defn{edge-length equilibrium stress}
$\omega$ of a framework $(G,\p)$
is a non-zero vector in $\RR^m$ that satisfies
\begin{eqnarray}
\label{eq:coker1}
\sum_j \omega_{ij} (\p_i-\p_j)&=&0\\
\label{eq:coker2}
\sum_j \omega_{ij} \|\p_i-\p_j\|&=&0
\end{eqnarray}
for each vertex $i\in V(G)$. The sums in \eqref{eq:coker1}--\eqref{eq:coker2}
are over neighbors $j$ of $i$ in $G$.

An edge-length equilibrium stress is \defn{strict} if it has no zero coordinates.

\end{definition}
Vectors $\omega$ satisfying only \eqref{eq:coker1} are called 
\defn{equilibrium stresses}
and play a fundamental role in the theory 
of bar-and-joint frameworks.  Equation \eqref{eq:coker2} is the new
part of the definition which is relevant to packings.
\begin{remark}
W. Lam (private communication) 
has shown that edge-length equilibrium stresses
are equivalent to the holomorphic quadratic 
differentials of Köbe type from \cite{lam2}.  In particular, 
for planar frameworks without boundary (in the sense of 
\cite{lam2}), there are none.

Interestingly, the main theorem of \cite{lam2}
says that a planar embedded framework has an 
edge-length equilibrium stress if and only 
if a triangulation of its dual medial graph has 
an equilibrium stress satisfying the formally
different condition  $\sum_j \omega_{ij}\|\p_i - \p_j\|^2 = 0$
at each vertex.  This condition is connected to orthogonal 
circle patterns \cite{lam2} and discrete minimal surfaces 
\cite{lam}.
\end{remark}
We defined edge-length equilibrium stresses because they 
are the co-kernel vectors of $M$.
\begin{lemma}
\label{lem:coker}
Let $G$ be a graph, $P\in S_G$ be a packing, and $(G,\p)$ its 
underlying framework.  Then $\omega$ is an edge-length equilibrium stress
of $(G,\p)$ if and only if $\omega$ is in the co-kernel of the 
packing constraint Jacobian $M$.
\end{lemma}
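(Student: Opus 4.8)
The plan is to directly compute $\trans{\omega} M$ for an arbitrary $\omega \in \RR^m$ and read off when it vanishes. Writing $M$ row-by-row as in \eqref{eq: contact jacobian}, the product $\trans{\omega}M$ is a vector in $\RR^{3n}$, naturally split into blocks indexed by the vertices: for each vertex $i$ there is a two-dimensional block coming from the $\p_i$-columns and a one-dimensional block coming from the $r_i$-column. The only rows of $M$ with a nonzero entry in a column belonging to vertex $i$ are those indexed by edges $ij$ incident to $i$, so each block of $\trans{\omega}M$ is a sum over the neighbors $j$ of $i$.

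First I would compute the $\p_i$-block. The edge $ij$ contributes $\omega_{ij}$ times the $\p_i$-entry of its row, namely $2(\p_i - \p_j)$ (and an edge $ji$ is the same edge, contributing to the same block the same way). Hence the $\p_i$-block of $\trans{\omega}M$ equals $2\sum_j \omega_{ij}(\p_i - \p_j)$, where the sum is over neighbors $j$ of $i$. This is zero for all $i$ precisely when \eqref{eq:coker1} holds. Next, the $r_i$-block: edge $ij$ contributes $\omega_{ij}$ times $-2\|\p_i - \p_j\|$, so the $r_i$-block equals $-2\sum_j \omega_{ij}\|\p_i - \p_j\|$, which vanishes for all $i$ precisely when \eqref{eq:coker2} holds. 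Since $P \in S_G$ means exactly the pairs indexed by $E(G)$ are tangent, the simplification $r_i + r_j = \|\p_i - \p_j\|$ used to derive \eqref{eq: contact jacobian} is valid at $P$, so these are genuinely the entries of $M$ at the point $P$ in question.

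Putting the two computations together: $\trans{\omega} M = 0$ if and only if both \eqref{eq:coker1} and \eqref{eq:coker2} hold at every vertex, which by definition says $\omega$ is an edge-length equilibrium stress of $(G,\p)$ (recall $\omega$ is required to be nonzero in that definition, matching the requirement that a co-kernel vector be nonzero). This is exactly the claimed equivalence. The argument is essentially a bookkeeping exercise; the only point requiring a little care is the convention that each geometric edge $ij$ corresponds to a single row of $M$, and that $\omega_{ij}$ and the differences $\p_i - \p_j$ are to be read consistently (antisymmetrically in $\p_i - \p_j$, symmetrically in $\omega$) so that the contribution of edge $ij$ to the $\p_i$-block and to the $\p_j$-block have opposite signs in the vector part but the same sign in the radius part — precisely the asymmetry between \eqref{eq:coker1} and \eqref{eq:coker2}. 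I do not anticipate a genuine obstacle here; this lemma is the formal hinge that lets the later sections replace a rank computation on $M$ by the geometric question of whether $(G,\p)$ carries an edge-length equilibrium stress.
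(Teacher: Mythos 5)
Your proposal is correct and is exactly the paper's argument: the paper's proof is the one-line observation that equations \eqref{eq:coker1} and \eqref{eq:coker2} are $\trans{\omega}M=0$ read off column by column, which is precisely the block computation you carry out in detail. Your sign bookkeeping (antisymmetric in the $\p$-blocks, symmetric in the $r$-blocks) matches the Jacobian row pattern \eqref{eq: contact jacobian}, so there is nothing to add.
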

\begin{proof}
Equations \eqref{eq:coker1} and \eqref{eq:coker2} are equivalent 
to $\omega^t M = 0$ written down column by column.
\end{proof}

Next we want to show that there can be no co-kernel vector for the underlying
framework of a disk packing $P$.
We will do this by showing a stronger statement, namely that there can be no edge-length 
equilibrium  stress for \emph{any} bar-and-joint framework with a planar embedding.
(When $m > 2n - 3$, there will always be equilibrium stresses of any framework
$(G,\p)$.  However, none of them will be 
an edge-length equilibrium stress, satisfying Equation (\ref{eq:coker2})
when $(G,\p)$ has a planar embedding.)

\begin{definition}
We say that a framework $(G,\p)$ is a 
\defn{planar embedded framework} if all the 
points in the configuration $\p$ are distinct 
and correspond to the vertices of a non-crossing, straight 
line drawing of $G$ in the plane.  (In particular, the existence of a
planar embedded framework $(G,\p)$ implies that $G$ is a planar graph.)
\end{definition}

\begin{definition}
Given a planar embedded framework $(G,\p)$ and 
a strict edge-length equilibrium  stress $\omega$, we can assign 
a sign in $\{+,-\}$ to each undirected edge $ij$ using 
the sign of $\omega_{ij}$. This assignment gives us a \defn{sign vector}.

Given a sign vector on a planar embedded framework
we can define the \defn{index} $I_i$ as 
the number of times the sign changes as we traverse the 
edges in order around vertex $i$.  (We use the planar embedding to 
get the cyclic ordering of edges at each vertex.)

The index $I_i$ is always even.
\end{definition}

The following is Cauchy's index lemma, which can be proven using Euler's 
formula. For a proof, see e.g.,~\cite[Lemma 5.2]{gluck}
or~\cite[Page 87]{AlexConv}.
\begin{lemma}
\label{lem:topo}
Let $(G,\p)$ be a planar embedded framework and let
$s$ be a  sign vector.
Then $\sum_i I_i \leq 4n-8$. Thus there must be at least one vertex 
with index of either $0$ or $2$.
\end{lemma}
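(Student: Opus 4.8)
The plan is a double-counting argument driven by Euler's formula, in the spirit of the cited references. Fix the non-crossing straight-line drawing underlying $(G,\p)$ and write $n$, $e$, $f$ for the numbers of vertices, edges, and faces of the drawing (counting the unbounded face). It suffices to handle the case where $G$ is connected with $n\ge 3$: for a general planar embedded framework one applies this case to each connected component, and components that are isolated vertices or single edges contribute $0$ to $\sum_i I_i$, so they only make the inequality easier. (The cases $n\le 2$ are trivial.)

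First I would recast $\sum_i I_i$ as a count over \emph{corners} of the drawing. A corner is an angular sector at a vertex $i$ bounded by two edges $ij$, $ik$ that are consecutive in the cyclic order around $i$; every corner lies in a unique face, where it is bounded by the same consecutive pair in that face's boundary walk. Call a corner a \emph{flip} if the two bounding edges get opposite signs. Then $\sum_i I_i$ is exactly the number of flips counted vertex by vertex, while $\sum_f c_f$ is the number of flips counted face by face, where $c_f$ denotes the number of sign changes encountered along the boundary walk of $f$. Since the set of corners is partitioned by vertices and also by faces, $\sum_i I_i = \sum_f c_f$.

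Next I would bound $c_f$. The signs along the boundary walk of $f$ form a cyclic sequence of $\pm$'s, so $c_f$ is even (the same reason $I_i$ is even), and clearly $c_f\le \ell_f$, where $\ell_f$ is the length of the boundary walk, since each edge-side can carry at most one sign change. Hence $c_f\le \ell_f$ if $\ell_f$ is even and $c_f\le \ell_f-1$ if $\ell_f$ is odd. Summing over faces and using $\sum_f \ell_f = 2e$ gives $\sum_f c_f \le 2e - o$, where $o$ is the number of odd-length faces. Now I use that $G$ is simple (a straight-line drawing has no loops or parallel edges) and connected with $n\ge 3$, which forces $\ell_f\ge 3$ for every face; thus every even face has $\ell_f\ge 4$ and every odd face has $\ell_f\ge 3$, so $2e=\sum_f\ell_f \ge 4f - o$, i.e.\ $o\ge 4f-2e$. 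Combining, $\sum_i I_i = \sum_f c_f \le 2e - o \le 4e - 4f$, and Euler's formula $f = e - n + 2$ turns this into $\sum_i I_i \le 4n-8$. Since each $I_i$ is a nonnegative even integer, not all of them can be at least $4$ (that would force $\sum_i I_i\ge 4n$), so some vertex has index $0$ or $2$.

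The Euler computation itself is short; the main obstacle is making the corner double-counting identity $\sum_i I_i=\sum_f c_f$ and the face-length bound $\ell_f\ge 3$ genuinely airtight when the embedding is not $2$-connected (bridges, cut vertices, tree-like pieces), because there the boundary walk of a face repeats edges and vertices. One has to check that the sector-based notion of ``corner'' is the right counting unit in that setting and that the parity argument for $c_f$ still applies to the repeated closed walk. Once those degeneracies are dispatched, the inequality and the conclusion follow as above.
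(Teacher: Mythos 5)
Your argument is correct and is precisely the standard Euler-formula double-counting proof of Cauchy's index lemma that the paper cites (Gluck; Alexandrov) rather than reproving. The degeneracies you flag at the end are already handled by your corner (angular-sector) formulation --- each edge-side is one step of exactly one face's boundary walk, so $\sum_f \ell_f = 2e$, $\sum_i I_i = \sum_f c_f$, the parity of $c_f$, and $\ell_f \ge 3$ all hold verbatim in the presence of bridges and cut vertices for a connected simple plane graph on $n\ge 3$ vertices --- and the only stray boundary case is $n=1$, where the stated inequality fails but the conclusion is vacuous and irrelevant to the application.
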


Next, we establish the following geometric lemma.
\begin{lemma}
\label{lem:geom}
Let $(G,\p)$ be a planar embedded framework with a strict edge-length equilibrium  
stress vector $\omega$. Let $s$ be the associated sign vector and $I$ be the associated 
index vector. For each vertex $i$, the index $I_i$
is at least $4$.
\end{lemma}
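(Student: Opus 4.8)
The plan is to work at a vertex $i$ and show that if $I_i \leq 2$, the two stress conditions \eqref{eq:coker1} and \eqref{eq:coker2} at $i$ cannot both hold nontrivially. Fix a vertex $i$ and consider its neighbors $j_1, \ldots, j_k$ in the cyclic order induced by the planar embedding, together with the edge vectors $\vec{u}_\ell = \p_{j_\ell} - \p_i$ and the scalars $\omega_\ell = \omega_{i j_\ell}$. Condition \eqref{eq:coker1} says $\sum_\ell \omega_\ell \vec{u}_\ell = 0$, and \eqref{eq:coker2} says $\sum_\ell \omega_\ell \|\vec{u}_\ell\| = 0$. The key geometric fact is that because $(G,\p)$ is a planar \emph{embedded} framework, the edge directions $\vec{u}_\ell / \|\vec{u}_\ell\|$ at vertex $i$ are distinct and appear in convex position around $i$ in their cyclic order; in particular no two are antipodal unless they are the only two edges (and then they'd have to have opposite signs to satisfy \eqref{eq:coker1}, forcing a contradiction with \eqref{eq:coker2} since the two norms are positive). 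This is exactly the setup of the Cauchy--Alexandrov arm/stress lemma.

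The main step is a sign-change argument. Suppose for contradiction that $I_i \le 2$, i.e., the sign pattern $(\signe \omega_1, \ldots, \signe \omega_k)$ has at most one ``block'' of each sign as we go around $i$ (either all one sign, $I_i=0$, or one contiguous arc of $+$ and one of $-$, $I_i = 2$). If $I_i = 0$ all $\omega_\ell$ have the same sign, and then $\sum_\ell \omega_\ell \|\vec u_\ell\| \ne 0$ since every term is strictly nonzero of that sign — contradicting \eqref{eq:coker2}. If $I_i = 2$, there is a line through $\p_i$ separating the $+$ edges from the $-$ edges (here I use the convex-position / cyclic-order property: a sign pattern with exactly two sign changes around $i$ can be realized by a halfplane, because the positive edges occupy a contiguous angular arc of less than $\pi$ after possibly... — more carefully, one picks a direction $\vec n$ such that $\vec n \cdot \vec u_\ell > 0$ for the $+$ edges and $< 0$ for the $-$ edges; existence of such $\vec n$ is the standard consequence of having only two sign changes in the cyclic order of \emph{distinct, convexly-ordered} directions). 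Then $\vec n \cdot \sum_\ell \omega_\ell \vec u_\ell = \sum_\ell \omega_\ell (\vec n \cdot \vec u_\ell)$ is a sum of strictly positive terms (sign of $\omega_\ell$ times sign of $\vec n \cdot \vec u_\ell$ is always $+$), so it cannot be $0$, contradicting \eqref{eq:coker1}. Hence $I_i \ge 4$ (recalling $I_i$ is even), as claimed.

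I would organize the writeup as: (1) set notation at vertex $i$; (2) record that edge directions at $i$ are pairwise non-parallel and occur in convex cyclic position because the drawing is a planar straight-line embedding; (3) handle $I_i = 0$ using \eqref{eq:coker2}; (4) handle $I_i = 2$ by producing a separating normal vector $\vec n$ and using \eqref{eq:coker1}; (5) conclude $I_i \ge 4$ from parity.

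The hard part will be step (4): cleanly justifying that a sign vector with exactly two sign changes around a vertex, where the edge directions are distinct and cyclically convex, admits a strict separating halfplane. One must be careful that ``two sign changes in cyclic order'' really does force the $+$'s and $-$'s into complementary arcs, and that those arcs, sitting among genuinely distinct directions emanating from a single point of a non-crossing drawing, can be strictly separated by a line through $\p_i$ (the degenerate case to rule out is when a $+$ edge and a $-$ edge are exactly antipodal, which either can't happen given the embedding constraints at an interior vertex, or when $k=2$ is handled directly as above using \eqref{eq:coker2}). This is precisely where the planar-embedding hypothesis, as opposed to an abstract framework, is essential, so I would state that dependence explicitly.
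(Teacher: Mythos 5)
Your handling of $I_i=0$ is fine and agrees with the paper, but your step (4) for $I_i=2$ has a genuine gap: you try to contradict the equilibrium condition \eqref{eq:coker1} \emph{alone} by producing a direction $\vec{n}$ with $\vec{n}\cdot(\p_{j}-\p_i)>0$ on all $+$ edges and $<0$ on all $-$ edges. Two sign changes in the cyclic order do \emph{not} guarantee such a strictly separating line. They only guarantee that each sign class occupies a contiguous angular block, and hence that \emph{at least one} of the two blocks spans an angle less than $\pi$; the other block may span more than $\pi$, in which case it cannot be placed in any open halfplane. Concretely, put $\p_i$ at the origin with neighbors in directions $0^\circ,60^\circ,180^\circ,300^\circ$ and stresses $\omega=(-\tfrac12,\,1,\,\tfrac12,\,1)$: there are exactly two sign changes, \eqref{eq:coker1} holds, and the three positive directions are $120^\circ$ apart, so they positively span the plane and no separating $\vec{n}$ exists. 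The deeper problem is that \eqref{eq:coker1} by itself simply cannot be contradicted when $I_i=2$: ordinary planar equilibrium stresses with exactly two sign changes at a vertex are ubiquitous (e.g.\ each boundary vertex of a triangle with one interior vertex, carrying its unique equilibrium stress, where the two triangle edges and the spoke get opposite signs). Any correct proof must bring \eqref{eq:coker2} into the $I_i=2$ case as well; your draft uses it only for $I_i=0$.

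The paper's argument does exactly this. It takes the sign class that \emph{is} confined to a wedge of half-angle $\theta<\pi/2$, rotates so that the $x$-axis bisects that wedge, and projects \eqref{eq:coker1} onto the $x$-axis (legitimate since \eqref{eq:coker1} survives linear projection). Each projected term $(p^x_i-p^x_j)$ is then compared with $\cos(\theta)\,\|\p_i-\p_j\|$: the inequality goes one way for edges inside the wedge and the other way for edges outside it. Summing and using the projected \eqref{eq:coker1} as the pivot produces a strict inequality between the two signed halves of $\sum_j\omega_{ij}\|\p_i-\p_j\|$, so \eqref{eq:coker1} and \eqref{eq:coker2} cannot hold simultaneously. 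To repair your writeup, replace your separating-halfplane step with this comparison argument, or with the reduction the paper mentions in its remark: lift the neighbors by $(x,y)\mapsto(x,y,\sqrt{x^2+y^2})$ to a convex cone in $\RR^3$, observe that the planar pair \eqref{eq:coker1}--\eqref{eq:coker2} becomes a single $3$D equilibrium condition at a strictly convex vertex, and invoke the Cauchy--Alexandrov stress lemma (where the separating-plane idea you had in mind actually is valid, precisely because of the convexity of the lifted cone).
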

\begin{figure}[htbp]
    \centering
    \includegraphics[width=0.25\textwidth]{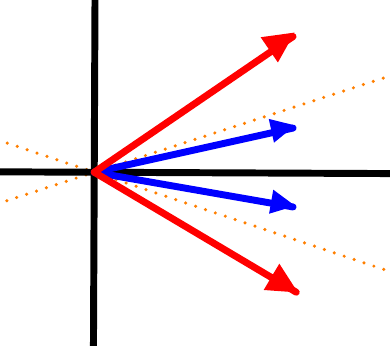}
    \caption{Illustration of the proof of Lemma \ref{lem:geom}.  This 
    vector configuration has only two sign changes (blue are negative signs and 
    red are positive) and satisfies the equilibrium condition \eqref{eq:coker1}
    with coefficients $\pm 1$. This forces the sum of the lengths of the
    red edges to be larger than that of the blue edges, so \eqref{eq:coker2} 
    is violated.}
    \label{fig: wedge}
\end{figure}
\begin{proof}
Suppose some vertex $i$ has fewer than $4$ sign changes. 
If it has $0$ sign changes, then it cannot satisfy 
Equation (\ref{eq:coker2}), as all lengths are positive. 
So now let's suppose that it has $2$ sign changes.

With only $2$ sign changes, the edges from at least one of the signs (say $-$) must
be in a wedge of angle $2\theta < \pi$.

Euclidean images of $\p$ have the same edge-length equilibrium stresses, 
so we may assume that the positive part of the  $x$-axis
is the bisector of the wedge.  
The $2$D equilibrium condition of Equation (\ref{eq:coker1}) must hold 
after projection along any direction, including onto the $x$-axis,
since \eqref{eq:coker1} is invariant under any affine transformation
(see, e.g., \cite{MR2504741}).

Let $N^+$ denote the neighbors of $i$ connected by edges with 
positive sign and $N^-$ the neighbors connected by negatively signed
edges.
Let $p^x_i$ be the $x$-coordinate of the point $\p_i$.
We then get:
\begin{eqnarray*}
\sum_{j \in N^+} \omega_{ij} (p^x_i-p^x_j)
=
\sum_{j \in N^-} -\omega_{ij} (p^x_i-p^x_j)
\end{eqnarray*}
But for $j \in N^+$ (outside the wedge), we have 
\begin{eqnarray*}
(p^x_i-p^x_j) < \cos(\theta)\; \|\p_i-\p_j\| 
\end{eqnarray*}
while for $j \in N^-$ (inside the wedge),  we have 
\begin{eqnarray*} 
(p^x_i-p^x_j) > \cos(\theta)\; \|\p_i-\p_j\|
\end{eqnarray*}
Putting these estimates together we have 
\[
    \sum_{j\in N^+} \omega_{ij} \cos(\theta)\; \|\p_i-\p_j\| 
    > \sum_{j \in N^+} \omega_{ij} (p^x_i-p^x_j)
    = \sum_{j \in N^-} -\omega_{ij} (p^x_i-p^x_j)
    > \sum_{j\in N^-} -\omega_{ij} \cos(\theta)\; \|\p_i-\p_j\| 
\]
which means that Equations \eqref{eq:coker1} and \eqref{eq:coker2} cannot 
hold simultaneously.
\end{proof}
See Figure~\ref{fig: wedge} for an illustration of this argument.
Understanding which, necessarily non-planar, frameworks 
can have edge-length equilibrium stresses would be interesting.
\begin{remark}
Lemma \ref{lem:geom} can also be reduced to the 
the Cauchy-Alexandrov stress lemma (see \cite[Lemma 5.2]{gluck}), 
which says that an equilibrium stress 
satisfying Equation (\ref{eq:coker1})  in 3D must have
at least 4 sign changes at a strictly convex vertex of a 
polytope. 

In the reduction, we place a vertex with two sign changes 
at the origin and lift its neighbors from 
$(x,y)$ to $(x,y,\sqrt{x^2+y^2})$. If the 3D equilibrium
equation holds \eqref{eq:coker1} at this vertex, 
then both of $\eqref{eq:coker1}$ and \eqref{eq:coker2}
hold in the plane.

The index and stress lemmas are the two central ingredients in a  
proof of Cauchy's theorem on the (infinitesimal) rigidity of 
convex polyhedra (see \cite{AlexConv}).
\end{remark}

Putting together the index lemma \eqref{lem:topo} and geometric lemma \eqref{lem:geom} 
we get:
\begin{lemma}
\label{lem:noco}
Let $(G,\p)$ be a planar embedded framework. Then it cannot have a non-zero edge-length 
equilibrium  stress  vector.
\end{lemma}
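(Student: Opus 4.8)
The plan is to derive Lemma \ref{lem:noco} as a direct consequence of the two preceding lemmas by a case split on whether a hypothetical stress is strict. First I would suppose, for contradiction, that $(G,\p)$ is a planar embedded framework admitting a non-zero edge-length equilibrium stress $\omega$. The obstacle to applying Lemmas \ref{lem:topo} and \ref{lem:geom} immediately is that those lemmas require a \emph{strict} stress (no zero coordinates), so that a sign vector and its associated index vector are well-defined. So the main work is a reduction to the strict case.

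To handle this, I would restrict attention to the support of $\omega$. Let $E' = \{ij \in E : \omega_{ij} \neq 0\}$ and let $G' = (V', E')$ be the subgraph consisting of these edges together with the vertices they touch. The restriction $\omega'$ of $\omega$ to $E'$ is then a \emph{strict} edge-length equilibrium stress of the framework $(G', \p')$, where $\p'$ is the restriction of $\p$ to $V'$: equations \eqref{eq:coker1} and \eqref{eq:coker2} at a vertex $i$ of $G'$ only involve the neighbors $j$ with $\omega_{ij} \neq 0$, so they are unaffected by deleting zero-stressed edges, and any isolated vertices created by the deletion carry no constraints and can be dropped. Moreover $(G', \p')$ is still a planar embedded framework, since deleting edges and vertices from a non-crossing straight-line drawing leaves a non-crossing straight-line drawing, and the points of $\p'$ remain distinct. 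Note $G'$ has at least one edge since $\omega \neq 0$, and in fact every vertex of $G'$ has degree at least one; combined with the constraint that forces index at least $4$, every vertex of $G'$ in fact has degree at least $4$, so $|V'| \geq 5$.

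Now I would apply Lemma \ref{lem:topo} to $(G', \p')$ with the sign vector coming from $\omega'$: it gives a vertex $i$ of $G'$ with index $I_i \in \{0, 2\}$. But Lemma \ref{lem:geom}, applied to the same planar embedded framework $(G', \p')$ with its strict stress $\omega'$, asserts $I_i \geq 4$ for \emph{every} vertex $i$ of $G'$. This is a contradiction, so no non-zero edge-length equilibrium stress can exist. I expect the only subtlety — and it is minor — to be the bookkeeping in the reduction to the strict case: checking that passing to the support subgraph genuinely preserves both the planar-embedded-framework property and the equilibrium equations vertex by vertex, and that dropping isolated vertices is harmless. Everything else is an immediate combination of the two cited lemmas.
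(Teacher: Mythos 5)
Your proof is correct and takes essentially the same route as the paper's: remove the zero-stress edges to obtain a strict edge-length equilibrium stress on a planar embedded subframework, then derive a contradiction between Lemmas~\ref{lem:topo} and~\ref{lem:geom}. The paper's proof is a two-sentence version of exactly this reduction; your added observations about minimum degree are true but not needed.
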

\begin{proof}
If $(G,\p)$ has an edge-length equilibrium  stress vector, then by removing edges 
with $0$ stress coefficients, we obtain
a subframework $(G',\p')$ with a strict edge-length equilibrium  stress  vector. 
A strict edge-length equilibrium  stress vector would form a contradiction 
between Lemmas~\ref{lem:topo} and~\ref{lem:geom}.
\end{proof}

\begin{remark}
A similar statement and proof to Lemma \ref{lem:noco}, phrased 
in terms of inversive distances \cite{MR0203568}, 
was found independently by Bowers, Bowers and Pratt \cite{bowbow-circlepolyhedra}.
\end{remark}

We are ready to prove the main result of this section.

\begin{prop}
\label{prop:man}
The set $S_G$, if not empty, is a smooth submanifold of dimension $3n-m$.
\end{prop}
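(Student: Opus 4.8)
The plan is to apply the constant-rank (or submersion) theorem to the map cutting out $S_G$. First I would set up the local picture: near any packing $P \in S_G$, there is an open neighborhood $U \subset \RR^{3n}$ in which membership in $S_G$ is equivalent to satisfying the $m$ equations of the form \eqref{eq: contact constraint} indexed by the edges of $G$ — this is because the contact graph is locally constant near $P$ (one must move a positive distance before a non-edge pair can touch, and edges are forced to stay in contact). Let $F : U \to \RR^m$ be the smooth map whose components are $\|\p_i - \p_j\|^2 - (r_i+r_j)^2$ (squaring to avoid the norm's non-smoothness at $0$, which is harmless since centers of tangent disks are distinct) for $ij \in E(G)$. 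Then $S_G \cap U = F^{-1}(\veczero)$, and the Jacobian of $F$ at $P$ is (up to the harmless rescaling by $2$ noted in the text) exactly the matrix $M$ from \eqref{eq: contact jacobian}.

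The key step is to show $M$ has full row rank $m$ at every $P \in S_G$. By Lemma~\ref{lem:coker}, a nonzero co-kernel vector of $M$ is precisely a nonzero edge-length equilibrium stress of the underlying framework $(G,\p)$. But the underlying framework of a packing \emph{is} a planar embedded framework: the disk interiors are disjoint, so the centers are distinct, and joining the centers of tangent disks by straight segments gives a non-crossing drawing — two such segments can only cross if the corresponding closed disks overlap beyond a single tangency point, which is impossible for a packing. (This is essentially the classical fact that the contact graph of a disk packing is planar, realized by the straight-line ``tangency drawing''.) Hence by Lemma~\ref{lem:noco}, $(G,\p)$ has no nonzero edge-length equilibrium stress, so the co-kernel of $M$ is trivial, i.e., $\rank M = m$.

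Therefore $\veczero$ is a regular value of $F$, and the preimage theorem gives that $S_G \cap U = F^{-1}(\veczero)$ is a smooth submanifold of $U$ of dimension $3n - m$. Since this holds in a neighborhood of every point of $S_G$, the set $S_G$ is globally a smooth submanifold of $\RR^{3n}$ of dimension $3n-m$ (and it is nonempty by hypothesis).

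The main obstacle — and the only substantive point beyond bookkeeping — is the claim that the underlying framework of a packing is planar embedded, so that Lemma~\ref{lem:noco} applies; this needs the (easy but essential) geometric observation that the tangency segments of a disk packing do not cross. Everything else (local constancy of the contact graph, smoothness of $F$ via squaring, identifying the Jacobian with $M$, and invoking the preimage theorem) is routine, with the genuinely hard analytic content already isolated in Lemmas~\ref{lem:topo}--\ref{lem:noco}.
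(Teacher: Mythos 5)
Your proof is correct and takes essentially the same route as the paper's: reduce locally to the $m$ contact equations, show the Jacobian has full row rank $m$ by combining Lemma~\ref{lem:coker} with Lemma~\ref{lem:noco}, and conclude via the implicit function (preimage) theorem. The only difference is that you explicitly justify that the underlying framework of a packing is planar embedded (distinct centers and non-crossing tangency segments), a point the paper asserts without proof.
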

\begin{proof}
Let $P$ be a point in $S_G$. It has the edges of $G$ in contact 
and no other pairs of disks in contact
in a sufficiently small neighborhood. Thus restricted to this neighborhood, $S_G$ is 
exactly defined by the
contact constraints of Equation \ref{eq: contact constraint}.
Meanwhile, the contact graph creates a planar embedded framework.
From Lemmas~\ref{lem:coker} and ~\ref{lem:noco}
there can be no co-kernel vector, and hence the Jacobian matrix 
has rank $m$. By the implicit function theorem,
$S_G$ restricted to some neighborhood of $P$ is a smooth manifold of 
dimension $3n-m$.
\end{proof}

\begin{remark}
\label{rem:kat}
The Köbe-Andreev-Thurston theorem implies that $S_G$ is non-empty provided
that $G$ is planar. (See \cite[Theorem 13.6.2]{gt3m}.)
\end{remark}

\section{Finishing the Proof}\label{sec: rad proj}

\begin{definition}\label{def: rad proj map}
Let $\pi$ be the projection $(\p,\rr)\mapsto \rr$
taking a disk packing  to its vector of radii 
in $\RR^n$.
\end{definition}

\begin{definition}\label{def: pi kernel vec}
A \defn{$\pi$-kernel}
vector of a disk packing $P$ with contact graph $G$ is  
a tangent vector to $S_G$ of the form $(\p',0)$.
These vectors form the kernel of the linearization
of the map $\pi$.
\end{definition}

\begin{lemma}
\label{lem:piker}
The $\pi$-kernel vectors of a disk packing $(\p,\rr)$ with contact
graph $G$ are
infinitesimal flexes of the underlying bar-framework $(G,\p)$.
\end{lemma}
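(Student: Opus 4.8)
The plan is to unwind the definitions and reduce to a one-line reading of the packing constraint Jacobian. By Definition~\ref{def: pi kernel vec}, a $\pi$-kernel vector of $(\p,\rr)$ is a tangent vector to $S_G$ at $P=(\p,\rr)$ of the special form $(\p',\veczero)$. As observed in the proof of Proposition~\ref{prop:man}, near $P$ the set $S_G$ is cut out exactly by the $m$ contact equations \eqref{eq: contact constraint} and, since $M$ has full row rank $m$ there, the implicit function theorem identifies the tangent space $T_P S_G$ with $\kernel M$. So the whole task is to see what the condition $M\cdot(\p',\rr')=\veczero$ says, and then specialize to $\rr'=\veczero$.

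First I would pair a candidate tangent vector $(\p',\rr')\in\RR^{3n}$ with the rows of $M$ using the explicit pattern in \eqref{eq: contact jacobian}. The row indexed by an edge $ij$ has entry $2(\p_i-\p_j)$ in the $\p_i$-block, $2(\p_j-\p_i)$ in the $\p_j$-block, and $-2\|\p_i-\p_j\|$ in each of the $r_i$- and $r_j$-columns, so dotting it with $(\p',\rr')$ and dividing by $2$ gives
\[
(\p_i-\p_j)\cdot(\p'_i-\p'_j)\;=\;\|\p_i-\p_j\|\,(r'_i+r'_j).
\]
Thus $(\p',\rr')\in T_P S_G$ if and only if this identity holds for every edge $ij$ of $G$.

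Finally I would impose the defining condition $\rr'=\veczero$ of a $\pi$-kernel vector: the right-hand side vanishes for every edge, leaving $(\p_i-\p_j)\cdot(\p'_i-\p'_j)=0$ for all $ij\in E(G)$, which is precisely the defining condition of Definition~\ref{def: inf flex} for $\p'$ to be an infinitesimal flex of the underlying framework $(G,\p)$. This establishes the lemma. I do not expect a genuine obstacle here; the only point that needs to be invoked rather than recomputed is the identification $T_P S_G=\kernel M$, which is exactly the content of Proposition~\ref{prop:man} and its proof. (The same computation run in reverse, together with the full-rank property of $M$, would also give the converse statement that every infinitesimal flex $\p'$ of $(G,\p)$ extends to a tangent vector $(\p',\veczero)$ of $S_G$, but that is not needed for the assertion as stated.)
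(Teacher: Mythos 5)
Your proof is correct and follows essentially the same route as the paper's: both identify the tangent space $T_P S_G$ with $\kernel M$ using the full-rank statement from Proposition~\ref{prop:man}, read off the per-edge condition $(\p_j-\p_i)\cdot(\p'_j-\p'_i)=(r_i+r_j)(r'_i+r'_j)$ from the Jacobian row pattern \eqref{eq: contact jacobian}, and then set $\rr'=\veczero$. (Your right-hand side $\|\p_i-\p_j\|(r'_i+r'_j)$ agrees with the paper's since $\|\p_i-\p_j\|=r_i+r_j$ on a contact edge.)
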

\begin{proof}
As in the proof of Proposition~\ref{prop:man}, we have the $m$-by-$3n$ Jacobian matrix
$M$ with rank $m$. Tangent vectors to $S_G$ are thus (right) kernel vectors
$(\p',\rr')$
of $M$. From Equation \eqref{eq: contact jacobian}, the  tangent vectors
$(\p',\rr')$ to $S_G$ are exactly the vectors satisfying
\begin{equation}
\label{eq:stan}
        (\p_j - \p_i)\cdot (\p'_j - \p'_i) =
    (r_j + r_i) (r'_j + r'_i).
\end{equation}

$\pi$-kernel vectors are defined as  the tangent vectors with $\rr'=0$, 
giving us
\begin{equation*}
        (\p_j - \p_i)\cdot (\p'_j - \p'_i) = 0.
\end{equation*}
Thus $\pi$-kernel vectors
of $M$ are exactly 
the infinitesimal flexes of
the underlying framework $(G,\p)$.
\end{proof}

We now recall a standard definition and result
from differential topology.
\begin{definition}
Let $X$ and $Y$ be smooth manifolds of dimension 
$m$ and $n$, respectively.  A smooth 
map $f : X\to Y$ is a \defn{submersion} at a 
point $x\in X$ if the linearization 
$\diff f_x : T_x X\to T_{f(x)} Y$ at $x$ is surjective.  A
point $x\in X$ is called a \defn{regular point} of $f$
if $f$ is a submersion at $x$; otherwise $x$ is a 
\defn{critical point} of $f$.

A point $y\in Y$ is called a \defn{regular value}
of $f$ if $f^{-1}(y)$ consists of only 
regular points (or is empty); otherwise $y$ is a \defn{critical
value}.
\end{definition}
The following is the semi-algebraic version of 
Sard's theorem.  
\begin{theorem}\label{thm: semi sard}
Let $X$ and $Y$ be smooth semi-algebraic manifolds 
of dimensions $m$ and $n$
defined over $\QQ$ and $f : X\to Y$ a rational map.
Then the critical values of $f$ are a semi-algebraic 
subset of $Y$, defined over $\QQ$, and of dimension 
strictly less than $n$.
\end{theorem}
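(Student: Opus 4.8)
The plan is to separate the two assertions --- that the critical values form a semi-algebraic set defined over $\QQ$, and that this set has dimension strictly less than $n$. The first is ``soft'', following from quantifier elimination; for the second I would reduce to the classical $C^\infty$ Sard theorem.

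For the first assertion I would work with the Gauss map. Since $X\subseteq\RR^N$ is a smooth semi-algebraic manifold of dimension $m$ defined over $\QQ$, the map sending $x$ to its tangent space $T_xX\subseteq\RR^N$ is a semi-algebraic map (into a Grassmannian) defined over $\QQ$; this is standard. A rational map $f$ is, locally on $X$, a tuple of $\QQ$-rational functions, so it is $C^\infty$ on $X$ with a $\QQ$-rational Jacobian matrix $Df$, and a point $x$ is critical exactly when the linear map $Df(x)|_{T_xX}$ has rank $<n$. This is a first-order condition over $\QQ$ in $x$ (``there do not exist $n$ vectors of $T_xX$ whose images are linearly independent''), so by Tarski--Seidenberg the critical locus $C\subseteq X$ is semi-algebraic over $\QQ$; applying Tarski--Seidenberg again, its image $f(C)\subseteq Y$ is semi-algebraic over $\QQ$.

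For the dimension bound I would combine two facts. First, an elementary one: a semi-algebraic subset $Z$ of the smooth $n$-manifold $Y$ has $\dim Z<n$ if and only if $Z$ has Lebesgue measure zero in every chart, since a semi-algebraic set of full dimension $n$ must contain a nonempty open subset of some chart. So it suffices to prove $f(C)$ is Lebesgue-null. Second, I would decompose $C$ into finitely many semi-algebraic cells $D_1,\dots,D_k$, each a smooth submanifold of $X$ (semi-algebraic cell decomposition). For each $i$ the restriction $f|_{D_i}:D_i\to Y$ is $C^\infty$, and at every $x\in D_i$ one has $\rank d(f|_{D_i})_x\le\rank(Df(x)|_{T_xX})<n$, so \emph{every} point of $D_i$ is critical for $f|_{D_i}$; by the classical Sard theorem $f(D_i)$, being the full set of critical values of $f|_{D_i}$, has measure zero in $Y$. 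A finite union of null sets is null, so $f(C)=\bigcup_i f(D_i)$ is null, and hence $\dim f(C)<n$.

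The main point requiring care is keeping the first half of the argument over $\QQ$: one must invoke correctly that the Gauss map of a $\QQ$-smooth semi-algebraic manifold is $\QQ$-semi-algebraic and that a rational map has a $\QQ$-rational Jacobian, so that the output $f(C)$ is genuinely defined over $\QQ$. The cell decomposition used in the second half is needed only as a semi-algebraic (not necessarily $\QQ$-) decomposition, since it serves solely to run the measure-zero estimate. Alternatively, the bound $\dim f(D_i)<n$ can be obtained without Sard by further refining the $D_i$ so that $\rank d(f|_{D_i})$ is constant and invoking the constant-rank theorem, which realizes $f(D_i)$ locally as a submanifold of dimension $<n$; I would use whichever route reads more cleanly in context.
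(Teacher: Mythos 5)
Your proposal is correct, but it is doing more work than the paper does: the paper's proof is essentially a citation. For the substantive content (critical values are semi-algebraic of dimension $<n$) the authors simply invoke the semi-algebraic Sard theorem, \cite[Theorem 9.6.2]{BCR98}, and then add only the observation needed to keep everything over $\QQ$ --- namely that the critical locus is cut out over $\QQ$ (vanishing of determinants/minors) and that quantifier elimination preserves the field of definition, so the image stays $\QQ$-semi-algebraic. Your field-of-definition argument (Gauss map and Jacobian are $\QQ$-semi-algebraic, criticality is a first-order condition over $\QQ$, Tarski--Seidenberg twice) is the same as theirs, just spelled out. Where you genuinely diverge is in re-proving the cited theorem: you reduce the dimension bound to the classical $C^\infty$ Sard theorem by taking a smooth cell decomposition of the critical locus, noting that every point of each cell remains critical for the restriction, concluding that the image is Lebesgue-null, and converting ``null'' back into ``dimension $<n$'' via the fact that a full-dimensional semi-algebraic subset of an $n$-manifold contains a nonempty open set. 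This is a valid and reasonably standard route (your constant-rank alternative, which avoids Sard entirely and is closer in spirit to how \cite{BCR98} argues, is equally fine); what it buys is self-containedness, at the cost of importing measure-theoretic language into an otherwise algebraic statement. Your correctly flagged point that the cell decomposition need not be over $\QQ$ --- since it only feeds the measure estimate, while the $\QQ$-ness of $f(C)$ comes from quantifier elimination --- is exactly the separation of concerns the paper's two-sentence proof relies on.
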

\begin{proof}
That the critical values are semi-algebraic and of lower 
dimension is \cite[Theorem 9.6.2]{BCR98}.  That the 
field of definition does not change follows from the 
fact that the critical points lie in a semi-algebraic subset 
defined over $\QQ$ (by the vanishing of a determinant), and
then the critical values do because quantifier elimination 
preserves field of definition \cite[Theorem 2.62]{BPR03}.
\end{proof}

\begin{lemma}
\label{lem:sard}
Let $\rr$ be a
a generic point in $\RR^{n}$ and 
let $P:=(\p,\rr)$ be a disk packing 
with $m$ contacts and contact graph $G$.
Then 
the linear space of $\pi$-kernel vectors 
is of dimension $2n-m$. 

Moreover, the set of packings with radii $\rr$ and contract graph $G$ 
form a smooth, semi-algebraic manifold of 
the same dimension $2n - m$.
\end{lemma}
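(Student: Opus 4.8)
The plan is to apply the semi-algebraic Sard theorem (Theorem~\ref{thm: semi sard}) to the radius projection $\pi : S_G \to \RR^n$, exploiting the fact that $S_G$ is a smooth semi-algebraic manifold of dimension $3n-m$ defined over $\QQ$ by Proposition~\ref{prop:man}. First I would observe that since $\rr$ is generic, it is algebraically independent over $\QQ$, hence cannot lie in any proper semi-algebraic subset of $\RR^n$ defined over $\QQ$; in particular, by Theorem~\ref{thm: semi sard}, $\rr$ is a regular value of $\pi$ (the critical values form such a proper subset). Thus every packing $P=(\p,\rr) \in \pi^{-1}(\rr)$ is a regular point, meaning the linearization $\diff\pi_P : T_P S_G \to \RR^n$ is surjective. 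Its kernel is exactly the space of $\pi$-kernel vectors, so by rank-nullity this space has dimension $(3n-m) - n = 2n-m$, proving the first claim.

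For the second claim, the set of packings with radii $\rr$ and contact graph $G$ is precisely the fiber $\pi^{-1}(\rr)$. Since $\rr$ is a regular value, the preimage theorem (regular value theorem) from differential topology tells us $\pi^{-1}(\rr)$ is a smooth submanifold of $S_G$ of dimension equal to the dimension of the kernel of $\diff\pi_P$ at each of its points, namely $2n-m$; note this dimension is the same at every point of the fiber because regularity holds throughout. It remains only to check that $\pi^{-1}(\rr)$ is semi-algebraic: it is the intersection of the semi-algebraic set $S_G$ with the affine subspace $\{(\p,\rr) : \text{radius coordinates} = \rr\}$, hence semi-algebraic (though no longer defined over $\QQ$, as $\rr$ is not rational — but the statement does not require this).

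I expect the only subtlety worth spelling out to be the passage from ``$\rr$ avoids the $\QQ$-defined critical-value set'' to ``$\rr$ is a regular value'': this is immediate once one notes that a generic point, having algebraically independent coordinates, lies outside the zero set of every nonzero polynomial with rational coefficients, and a proper semi-algebraic subset of $\RR^n$ defined over $\QQ$ is contained in such a zero set. The rest is a direct application of Theorem~\ref{thm: semi sard} together with the standard regular value theorem, with Proposition~\ref{prop:man} and Lemma~\ref{lem:piker} supplying the ambient manifold and the identification of the kernel. No genuinely hard step arises here; the work was already done in establishing smoothness of $S_G$ via Lemma~\ref{lem:noco}.
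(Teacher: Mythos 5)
Your proposal is correct and follows essentially the same route as the paper: invoke Proposition~\ref{prop:man} for smoothness of $S_G$, apply the semi-algebraic Sard theorem to conclude that the generic $\rr$ is a regular value of $\pi$, compute the kernel dimension by rank--nullity, and apply the preimage theorem to the fiber. Your explicit justification of why genericity forces $\rr$ to avoid the $\QQ$-defined critical-value set, and your remark on the semi-algebraicity of the fiber, are slightly more detailed than the paper's treatment but not a different argument.
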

\begin{proof}
Because $P$ exists, $S_G$ is non-empty, and so, by 
Proposition \ref{prop:man}, is a smooth semi-algebraic 
manifold of dimension $3n - m$.  The map $\pi : S_G\to \RR^n$
is a polynomial map,  so Theorem \ref{thm: semi sard} applies, making all 
the critical values of $\pi$ non-generic.

Since $\rr$ is generic, it must be a regular value of $\pi$.
Hence $P$ is a regular point, and the linearization 
of $\pi$ at $P$ has rank $n$.  Its kernel then 
has dimension $(3n - m) - n = 2n - m$.

Finally, the set of packings with radii $\rr$ and 
contact graph $G$ is simply $\pi^{-1}(\rr)$.  The 
preimage theorem (see, e.g., \cite[p. 21]{gp-diff}),
implies that $\pi^{-1}(\rr)$ is smooth and of 
the same dimension as the kernel of $\diff \pi_P$.
\end{proof}
The rank of $\diff \pi_P$ is never larger than 
the dimension of $S_G$, which is $3n - m$. 
If the dimension of $S_G$ is less than $n$,
$\diff \pi_P$ cannot be surjective for any $P$ 
in $S_G$, making every point in $S_G$ a critical point.
Hence, for $\rr$ to be generic, we must have $m \le 2n$.  
We will improve the preceding bound on $m$ momentarily.
\begin{remark}
The proof of Lemma \ref{lem:sard} shows that $\rr$ does 
not need to be generic for the conclusion 
to hold.  It just needs to be a regular
value of $\pi$.  By Theorem \ref{thm: semi sard},
the regular values of $\pi$ are a Zariski open subset
of $\RR^n$, defined over $\QQ$.
\end{remark}

We can now prove our main result.

\begin{proof}[Proof of Theorem~\ref{thm:main}]
Since we are in dimension $d=2$, there is a 
$3$-dimensional space of trivial infinitesimal
motions of any framework $(G,\p)$.

The packing $P := (\p,\rr)$ has $\rr$ generic
by hypothesis.  Lemma \ref{lem:sard} then 
implies that the space of $\pi$-kernel vectors
has dimension $2n - m$.  The presence of a $3$-dimensional
space of trivial infimitesimal motions then implies that
$2n - 3 \ge 3$, so $m\le 2n - 3$.
The same argument, applied to each subgraph of $G$,
shows that $G$ is Laman-sparse.

If $G$ has $m = 2n - 3$ edges, then Lemmas \ref{lem:piker}
and \ref{lem:sard} imply that $(G,\p)$ has only a $3$-dimensional
space of infinitesimal flexes and is thus infinitesimally
rigid and hence rigid.  Otherwise $m < 2n - 3$, 
and so the  $\pi^{-1}(\rr)$ has dimension at 
least $4$ by Lemma \ref{lem:sard}.  As the 
space of frameworks 
related to $(G,\p)$ by rigid body motions is only $3$-dimensional,
it follows that $(G,\p)$ is flexible.
A flexible framework is always infinitesimally flexible.

Since rigidity properties are invariant with respect to global scaling, we only need 
genericity of the radii ratios.
\end{proof}

\section{Isostatic Jamming}
\label{sec:jam}
In this section we use the technology developed above
to prove a result about ``jammed packings''.
In light of~\cite{iso}, the fact that this can be proven is not surprising. 
Our contribution is to show how the elementary methods we used to 
treat sticky disks adapt easily to jamming questions.

\paragraph{Rigidity preliminaries for jamming}
Loosely speaking we consider a set of disks in the plane 
with a fixed set of radii. We don't allow the disks to 
ever overlap. The disks are not sticky.
Now we suppose that there is some boundary shape surrounding these
disks that is uniformly shrinking. The boundary will start pressing the disks
together, and eventually the boundary can shrink no more. 
At this point in the process, 
we will say that the packing is ``locally maximally dense''. 

An equivalent way to study this problem is to assume that the
boundary shape stays fixed and that the disk radii are all scaling
up uniformly, maintaining their radii ratios. 
We will use this interpretation. 

The literature considers a number of different boundary 
shapes, including convex polyhedra (e.g., \cite{con88})
and flat tori (e.g., \cite{iso,PhysRevLett.114.135501}).
Here, we consider
a boundary  formed by three large touching
exterior disks.  This kind of boundary, which we will
call a ``tri-cusp'', has the advantage that it can be 
modeled by the same type of constraints as those on 
the interior disks.

\begin{definition}
 A \defn{packing inside of a tri-cusp}
 is a disk packing in the plane where 
the first three disks are in mutual contact, and the
remaining $n-3$ ``internal''
disks are in the interior tri-cusp shape bounded by these first $3$ disks. 
The packing will have some contact graph
$G$ that includes the triangle $\{1,2,3\}$.
See Figure~\ref{fig:nmd}.
\end{definition}

To represent the internal radii ratios
we define, for $j=4,5,\ldots, n-1$ the ratio
$\bar{r}_j:=r_j/r_n$.

\begin{definition}
We say that a disk packing in a  tri-cusp is 
\defn{locally maximally dense} if
there is no nearby tri-cusp
packing with the same $\{r_1, r_2, r_3, 
\bar{r}_4,\ldots,\bar{r}_{n-1}\}$
that has a higher area of coverage within the tri-cusp. 
The outer three disks must maintain contact.
\end{definition}

Local maximal density can be studied using a  notion 
related to infinitesimal rigidity.
\begin{definition}
Given  a disk packing  
in a tri-cusp $(\p,\rr)$, with contact graph $G$,
an \defn{infinitesimal tensegrity flex} is 
 a vector
$\p'$
that satisfies
\begin{eqnarray}
\label{eq:tflex0}
     (\p_j - \p_i)\cdot (\p'_j - \p'_i) 
    = 0, 
\end{eqnarray} 
on the three edges of the outer triangle (see Figure \ref{fig:nmd}) and satisfies
\begin{eqnarray}
\label{eq:tflex}
     (\p_j - \p_i)\cdot (\p'_j - \p'_i) 
    \ge 0, 
\end{eqnarray}
on the rest of the edges.

Any infinitesimal flex, in the 
sense of Definition \ref{def: inf flex},
is also an infinitesimal tensegrity flex.
We say that the packing is 
\defn{infinitesimally collectively jammed}
if the only infinitesimal tensegrity flexes $\p'$ are
are trivial infinitesimal flexes of $(G,\p)$.
There is always a $3$-dimensional space of 
trivial infinitesimal flexes.
\end{definition}
\begin{remark}
The terminology ``tensegrity flex'' comes from the
fact that a vector $\p'$ satisfying 
Equations \eqref{eq:tflex0}--\eqref{eq:tflex}
is an infinitesimal flex of a tensegrity 
structure  where the edges of
outer triangle are fixed-length bars, and all
of the internal edges are ``struts'' that can can increase, 
but not decrease, their
length during a flex.  (See the notes \cite{W16} for a 
detailed treatment of 
tensegrities.)
\end{remark}
These two notions are related by the following 
theorem of Connelly \cite{C08}.
\begin{theorem}\label{thm: dense vs inf jammed}
If a packing in a tri-cusp is infinitesimally
collectively jammed then it is 
locally maximally dense. If a packing in a tri-cusp is locally maximally
dense, then there is a sub-packing that is infinitesimally
collectively jammed.
\end{theorem}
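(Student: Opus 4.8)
\medskip
\noindent The plan is to prove the two implications separately, in both cases reducing local maximal density to a constrained extremum problem. Fix the boundary radii $r_1,r_2,r_3$ and the ratios $\bar r_4,\dots,\bar r_{n-1}$, and write every internal radius as $r_i=\bar r_i\,r_n$ with $\bar r_n:=1$, so that a tricusp packing with these data is a pair $(\p,r_n)$. Because the internal disks are pairwise disjoint, the covered area inside the tricusp equals $\pi r_n^2\sum_{i=4}^{n}\bar r_i^2$ plus a constant, so ``higher covered area'' means ``larger $r_n$'', and $P=(\p,\rr)$ is locally maximally dense precisely when $r_n$ has a local maximum over the semialgebraic set of pairs $(\p,r_n)$ with $\|\p_i-\p_j\|\ge r_i+r_j$ for all $i\neq j$ and $\|\p_a-\p_b\|=r_a+r_b$ for $ab\in B:=\{12,13,23\}$. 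At $P$ the tight constraints are exactly the edges of the contact graph $G$: the three boundary edges $B$ are equalities, the remaining contact edges $S:=E(G)\setminus B$ (``struts'') are active inequalities, and every strut has at least one internal endpoint. We may take $n\ge 4$, the case $n=3$ being vacuous.

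\medskip
\noindent\emph{Infinitesimally collectively jammed $\Rightarrow$ locally maximally dense.} I would prove the contrapositive by a blow-up argument. If $P$ is not locally maximally dense, pick tricusp packings $(\tilde\p^{(k)},\tilde r_n^{(k)})\to(\p,r_n)$ with $\tilde r_n^{(k)}>r_n$; after normalising each $\tilde\p^{(k)}$ by a rigid motion (which affects neither radii nor the packing property) I may assume $\w^{(k)}:=\tilde\p^{(k)}-\p$ is orthogonal to the $3$-dimensional space $T$ of trivial infinitesimal flexes. Since $P$ is infinitesimally collectively jammed its underlying bar framework is infinitesimally rigid, hence $S\neq\emptyset$; therefore $\w^{(k)}\neq 0$, for otherwise $\tilde\p^{(k)}$ would be congruent to $\p$ while some strut distance stayed at $r_i+r_j<\tilde r_i+\tilde r_j$, contradicting disjointness. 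Passing to a subsequence, $\w^{(k)}/\|\w^{(k)}\|\to\p'$, a unit vector; since $\p'\perp T$ it is nontrivial. For $ab\in B$, expanding $\|\tilde\p_a^{(k)}-\tilde\p_b^{(k)}\|^2=\|\p_a-\p_b\|^2$, dividing by $\|\w^{(k)}\|$, and letting $k\to\infty$ gives $(\p_a-\p_b)\cdot(\p'_a-\p'_b)=0$, which is \eqref{eq:tflex0}. For $ij\in S$, every radius has grown, so $\|\tilde\p_i^{(k)}-\tilde\p_j^{(k)}\|^2\ge(\tilde r_i+\tilde r_j)^2\ge(r_i+r_j)^2=\|\p_i-\p_j\|^2$, and the same expand-and-divide step gives $(\p_i-\p_j)\cdot(\p'_i-\p'_j)\ge 0$, which is \eqref{eq:tflex}. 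Thus $\p'$ is a nontrivial infinitesimal tensegrity flex and $P$ is not infinitesimally collectively jammed.

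\medskip
\noindent\emph{Locally maximally dense $\Rightarrow$ some sub-packing is infinitesimally collectively jammed.} Here I would extract a stress certificate from first-order optimality. The Fritz John conditions (valid at any local extremum, with no constraint qualification) at the local maximum $P$ give multipliers $\mu_0\ge 0$, $\mu_{ij}\ge 0$ for $ij\in S$, and $\lambda_{ab}\in\RR$ for $ab\in B$, not all zero, balancing the gradient of $r_n$ against the gradients of the active constraints $\|\p_i-\p_j\|^2-(r_i+r_j)^2$. The $\p_i$-components say that $\omega$, defined by $\omega_{ij}:=-\mu_{ij}\le 0$ on $S$ and $\omega_{ab}:=\lambda_{ab}$ on $B$, is an equilibrium stress of $(G,\p)$ in the sense of \eqref{eq:coker1}; the $r_n$-component says $\mu_0$ is a nonnegative combination of the $\mu_{ij}$ with strictly positive weights. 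Hence, if all $\mu_{ij}$ were $0$, then $\mu_0=0$ and $\omega$ would be supported on the three boundary edges, whose $3$-vertex triangle has no nonzero equilibrium stress, so all multipliers vanish --- a contradiction. Thus $\omega\neq 0$ and $\omega_{ij}<0$ for at least one strut: $P$ carries a \emph{proper} equilibrium stress (nonpositive on $S$). Replacing $\omega$ by a point in the relative interior of the polyhedral cone of proper stresses, I may assume its support $E^\star$ contains the support of every proper stress and $\omega_e<0$ for each strut $e\in E^\star$. Let $U^\star$ consist of the vertices meeting $E^\star$ together with $\{1,2,3\}$, and let $P^\star$ be the sub-packing on $U^\star$; then $\omega$ restricted to $U^\star$ is still an equilibrium stress of $(G[U^\star],\p|_{U^\star})$, since every $\omega$-nonzero edge at a vertex of $U^\star$ has both endpoints in $U^\star$. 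Pairing this restriction against an arbitrary infinitesimal tensegrity flex $\p'$ of $P^\star$ and using the identity $\sum_{e=ij}\omega_e(\p_i-\p_j)\cdot(\p'_i-\p'_j)=0$ valid for equilibrium stresses, together with \eqref{eq:tflex0} (which kills the $B$-terms) and the sign pattern of the $S$-terms (negative times nonnegative), forces $(\p_i-\p_j)\cdot(\p'_i-\p'_j)=0$ on all of $E^\star$; that is, $\p'$ is an infinitesimal flex of the bar framework $(U^\star,E^\star,\p)$. Consequently, if $(U^\star,E^\star,\p)$ is infinitesimally rigid, then $P^\star$ has only trivial tensegrity flexes, i.e.\ $P^\star$ is the desired infinitesimally collectively jammed sub-packing.

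\medskip
\noindent\emph{Where the difficulty lies.} The first implication and the reduction just sketched are routine; the crux is to ensure $(U^\star,E^\star,\p)$ is infinitesimally rigid. If it is not, there is a nontrivial infinitesimal flex of it, and the plan is to use this flex, together with the stress $\omega$ and the geometry of the tricusp, to identify a disk carrying no load in any proper stress, delete it, verify (via the contrapositive of the first implication applied to the smaller packing) that local maximal density is preserved, and then induct on the number of disks. Making this deletion step precise is delicate, because removing a ``wedged'' disk typically opens up room for expansion, so one must argue that only genuinely unstressed disks are deleted and that their deletion cannot be exploited. This is exactly the step that recovers the content of \cite{C08}, and it is the part I expect to demand the most care.
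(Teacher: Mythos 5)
The paper does not actually prove this statement: it is imported as a theorem of Connelly and cited to \cite{C08}, so there is no in-paper argument to compare yours against; I will judge the proposal on its own terms.

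Your first implication is correct and is the standard blow-up argument. A sequence of strictly denser nearby packings, normalized against rigid motions and rescaled by $\|\w^{(k)}\|$, subconverges to a unit direction $\p'$, and expanding the squared-distance constraints gives \eqref{eq:tflex0} on the boundary triangle and \eqref{eq:tflex} on the struts. The only point to tidy is the normalization: choosing the isometry that minimizes $\|g\tilde\p^{(k)}-\p\|$ makes $\w^{(k)}$ orthogonal to the trivial flexes at $g\tilde\p^{(k)}$, not at $\p$, so orthogonality of the limit to $T$ must be taken in the limit; this is routine.

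The second implication contains a genuine gap, which you flag yourself. The Fritz John step and the stress--flex pairing are sound: they produce a nonzero equilibrium stress $\omega$, nonpositive on the struts and strictly negative on at least one, and they show that every infinitesimal tensegrity flex of the sub-packing on $U^\star$ is a bar flex of $(U^\star,E^\star,\p)$. But this yields the theorem only \emph{conditionally} on $(U^\star,E^\star,\p)$ being infinitesimally rigid, and that is precisely the substantive content of the result: a stressed sub-tensegrity need not have an infinitesimally rigid stressed part in general, so local maximal density must be invoked a second time. The missing step is second order in nature: given a nontrivial bar flex $\p'$ of $(U^\star,E^\star,\p)$, normalized to vanish on the boundary triangle, the motion $\p+t\p'$ changes each squared distance by $2t(\p_i-\p_j)\cdot(\p'_i-\p'_j)+t^2\|\p'_i-\p'_j\|^2\ge 0$ for $t\ge 0$, so one obtains genuine nearby packings in which some contacts strictly open; only then can rattlers be identified, deleted, and the construction iterated, and one must still show the deletion cannot be exploited to increase density. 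Your closing sketch ("delete a wedged disk and induct") points at this but neither specifies which disk to delete nor proves that local maximal density survives the deletion. As it stands, the second half is an honest and correct reduction, not a proof.
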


When the tri-cusp 
packing is locally maximally dense, then the 
maximal
infinitesimally collectively jammed sub-packing forms a ``spine'' 
in which none of the disks can expand, even non-uniformly.
Disks that are not part of the spine are called 
``rattlers'' in the literature.  Rattlers 
can expand in any desired way.

\subsection{Bonus result: finite isostatic theorem}
The main result of this section is the following
``isostatic'' theorem:
\begin{theorem}
\label{cor:iso}
Let $P:=(\p,\rr)$ be 
a packing in a tri-cusp.
Suppose that the vector
$\bar{\rr}:=\{r_1, r_2, r_3, 
\bar{r}_4,\ldots,\bar{r}_{n-1}\}$
is generic in $\RR^{n-1}$.
Then $P$ 
cannot have more than 
$2n-2$ contacts. 
If $P$
is infinitesimally collectively jammed then
it has exactly $2n-2$ contacts.
\end{theorem}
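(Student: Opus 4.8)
The plan is to mimic the structure of the proof of Theorem~\ref{thm:main}, replacing the packing manifold $S_G$ by its analogue for tri-cusp packings and replacing the radius vector by $\bar\rr$. First I would define, for a graph $G$ containing the triangle $\{1,2,3\}$, the set $T_G \subset \RR^{3n}$ of tri-cusp packings whose contact graph is exactly $G$. Locally around any $P \in T_G$ this set is cut out by the $m$ contact equations \eqref{eq: contact constraint}, exactly as for $S_G$; the only structural difference is the extra open conditions that disks $4,\dots,n$ lie in the tri-cusp and that no non-edges touch, which do not affect smoothness. The underlying framework $(G,\p)$ of a tri-cusp packing is a planar embedded framework (the disks are interior-disjoint and the triangle bounds a region containing the rest), so Lemma~\ref{lem:noco} applies: there is no edge-length equilibrium stress, hence by Lemma~\ref{lem:coker} the Jacobian $M$ has rank $m$, and $T_G$ is a smooth semi-algebraic manifold of dimension $3n-m$ whenever it is non-empty.

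Next I would set up the right projection. Instead of $\pi:(\p,\rr)\mapsto\rr$, use the map $\bar\pi:(\p,\rr)\mapsto \bar\rr = (r_1,r_2,r_3,\bar r_4,\dots,\bar r_{n-1})$, which records three radii and $n-4$ radius ratios $\bar r_j = r_j/r_n$ — a rational map to $\RR^{n-1}$, defined over $\QQ$ (note $r_n$ is never zero on a genuine packing). Since $\bar\rr$ is generic, Theorem~\ref{thm: semi sard} makes it a regular value of $\bar\pi$, so $P$ is a regular point, $\diff\bar\pi_P$ has rank $n-1$, and its kernel has dimension $(3n-m)-(n-1) = 2n-m+1$. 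By the preimage theorem, $\bar\pi^{-1}(\bar\rr)$ — the space of tri-cusp packings with this fixed $\bar\rr$ — is a smooth manifold of dimension $2n-m+1$. Now I need the analogue of Lemma~\ref{lem:piker}: a tangent vector to $\bar\pi^{-1}(\bar\rr)$ is a kernel vector $(\p',\rr')$ of $M$ with $\bar\rr$-component zero, i.e. $r_1'=r_2'=r_3'=0$ and $r_j' = \lambda r_j$ for $j=4,\dots,n$ with a common $\lambda$ (differentiating $r_j/r_n = \bar r_j$ with $\bar r_j$ fixed forces $r_j'/r_j = r_n'/r_n =: \lambda$). Plugging into \eqref{eq:stan}, such a $\p'$ satisfies $(\p_j-\p_i)\cdot(\p_j'-\p_i') = (r_i+r_j)(r_i'+r_j')$, which is $0$ on the triangle edges (there all $r'$'s vanish) and equals $\lambda(r_i+r_j)^2 \ge 0$ — with a sign depending on $\lambda$ — on every internal edge. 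Thus for $\lambda \ge 0$ these are exactly infinitesimal tensegrity flexes; and trivial infinitesimal flexes (the $3$-dimensional space from Euclidean isometries, which correspond to $\lambda = 0$) always lie in this tangent space.

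For the contact bound: the space $\bar\pi^{-1}(\bar\rr)$ contains the $3$-dimensional space of Euclidean motions of $P$ plus, taking $\rr' = (0,0,0,r_4,\dots,r_n)$ (the uniform expansion direction, $\lambda=1$), at least one more tangent direction not of that form — uniform rescaling of a tri-cusp packing is not a rigid motion. Hence $2n-m+1 \ge 4$, i.e. $m \le 2n-3$... which is too strong and false (the triangle alone already gives room for more). The fix, and the step I expect to be the genuine obstacle, is that uniform expansion is \emph{not} tangent to $T_G$ unless the internal disks can actually grow inside the fixed tri-cusp — so one should not claim that extra direction unconditionally. Instead the clean argument is: the tangent space to $\bar\pi^{-1}(\bar\rr)$ at $P$ has dimension $2n-m+1$ and always contains the $3$-dimensional trivial space, giving only $2n-m+1\ge 3$, i.e. $m\le 2n-2$. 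That is the first claim. For the second, suppose $P$ is infinitesimally collectively jammed. I claim the tangent space of dimension $2n-m+1$ consists \emph{entirely} of trivial flexes: any tangent vector $(\p',\rr')$ decomposes as above with parameter $\lambda$; if $\lambda \ge 0$ then $\p'$ is an infinitesimal tensegrity flex, hence trivial by hypothesis; if $\lambda < 0$, negate to reduce to the previous case, so $\p'$ is again trivial and then \eqref{eq:stan} forces $\lambda(r_i+r_j)^2 = 0$ on internal edges, i.e. $\lambda = 0$ (there is at least one internal edge once $n\ge 4$; the case $n=3$ is the bare triangle, $m=3=2n-3 < 2n-2$, handled directly). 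Therefore the full $(2n-m+1)$-dimensional tangent space is the $3$-dimensional trivial space, giving $2n-m+1 = 3$, i.e. $m = 2n-2$. The main thing to get right is this case analysis on the sign of $\lambda$ and the interplay with Connelly's notion of tensegrity flex, together with confirming that $(G,\p)$ is genuinely a planar embedded framework so that Lemma~\ref{lem:noco} — hence smoothness of $T_G$ — is available.
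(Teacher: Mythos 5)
Your proposal is correct and follows essentially the same route as the paper: smoothness of the tri-cusp analogue of $S_G$ via Lemma~\ref{lem:noco}, the projection to $\bar\rr\in\RR^{n-1}$ with kernel dimension $2n-m+1\ge 3$ for the bound $m\le 2n-2$, and the observation that a $\pi$-kernel vector has $r'_j=\lambda r_j$ with a common sign, so that (after negating) it yields an infinitesimal tensegrity flex. The paper phrases the second half contrapositively ($m<2n-2$ forces a non-trivial tensegrity flex) rather than directly, and your mid-proof detour about the uniform-expansion direction is correctly retracted, so the final argument matches.
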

The genericity assumption in Theorem \ref{cor:iso} is only on 
$\bar{\rr}$ (as opposed to $\rr$ in Theorem \ref{thm:main}), so 
the relative scale between the 
inner disks and the outer three need not be generic. 
The weaker genericity hypothesis 
will allow for the possibility of one 
(and only one) extra contact in the packing. The upper bound of $2n-2$ 
contacts does not depend on any notions of density or jamming.  When 
there are fewer than $2n-2$ contacts, the theorem says  
then there must exist a a non-trivial infinitesimal 
tensegrity flex, thus precluding infinitesimal collective jamming.

Combining Theorems \ref{cor:iso} and \ref{thm: dense vs inf jammed},
we obtain:
\begin{corollary}
Let $P := (\p,\rr)$ be a locally maximally dense packing 
in a tri-cusp
with
the vector $\bar{\rr}:=\{r_1, r_2, r_3, 
\bar{r}_4,...,\bar{r}_{n-1}\}$
generic in $\RR^{n-1}$.  Then $P$ has a sub-packing 
in a tri-cusp $P'$ of $n'$
disks  with $2n' - 2$ contacts among them
that is infinitesimally collectively jammed.
\end{corollary}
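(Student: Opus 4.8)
The plan is to obtain the Corollary as a short combination of Theorem~\ref{thm: dense vs inf jammed} and Theorem~\ref{cor:iso}; the only genuine point is that the genericity hypothesis is inherited by sub-packings.

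First I would apply Theorem~\ref{thm: dense vs inf jammed} to the locally maximally dense packing $P$: it yields a sub-packing $P'$ in a tri-cusp that is infinitesimally collectively jammed. Since $P'$ is, by definition, a packing in a tri-cusp, it contains the outer triangle $\{1,2,3\}$; write $n'$ for the number of its disks and let $S' \subseteq \{4,\dots,n\}$ index its internal disks, so $n' = 3 + |S'|$. I would also note that $|S'| \ge 1$ (hence $n' \ge 4$): if every internal disk of $P$ were a rattler it could be expanded, contradicting local maximal density, so the spine $P'$ must retain at least one internal disk; this is what makes the count $2n'-2$ meaningful.

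Next I would verify that the radius-ratio vector $\bar{\rr}'$ of $P'$ is generic in $\RR^{n'-1}$. After re-indexing $P'$ into standard form, the coordinates of $\bar{\rr}'$ are $r_1, r_2, r_3$ together with the ratios $r_j/r_m$ for $j \in S' \setminus \{m\}$, where $m := \max S'$. If $m = n$, these ratios are exactly the entries $\bar r_j$ of $\bar{\rr}$, so $\bar{\rr}'$ is a sub-collection of the algebraically independent family $\bar{\rr}$ and hence generic. If $m < n$, then $m \in \{4,\dots,n-1\}$ and $r_j/r_m = \bar r_j / \bar r_m$; here $r_1, r_2, r_3$ and $\{\bar r_j : j \in S'\}$ form a sub-collection of $\bar{\rr}$, hence are algebraically independent over $\QQ$, and a short clearing-denominators argument shows that $r_1, r_2, r_3$ together with $\{\bar r_j/\bar r_m : j \in S'\setminus\{m\}\}$ remain algebraically independent over $\QQ$. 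Either way $\bar{\rr}'$ is generic, so Theorem~\ref{cor:iso} applies to $P'$: being a packing in a tri-cusp with generic radius ratios that is infinitesimally collectively jammed, it has exactly $2n'-2$ contacts, which is the assertion.

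I expect the only real obstacle to be the bookkeeping in the second-to-last step, namely confirming that restricting to a sub-packing---possibly with a different disk playing the role of the normalizing disk---preserves algebraic independence of the radius ratios. Everything else is a direct invocation of the two theorems already available.
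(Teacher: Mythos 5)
Your proposal is correct and follows exactly the route the paper intends: the corollary is stated as an immediate combination of Theorem~\ref{thm: dense vs inf jammed} (extract the infinitesimally collectively jammed sub-packing) with Theorem~\ref{cor:iso} (count its contacts), and the paper offers no further proof. The one point you elaborate---that the sub-packing's radius-ratio vector remains generic, including the re-normalization when a different disk plays the role of disk $n$---is left implicit in the paper, and your clearing-denominators argument handles it correctly.
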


\begin{figure}[htbp]
    \centering
    \subfigure[]{\includegraphics[width=0.40\textwidth]{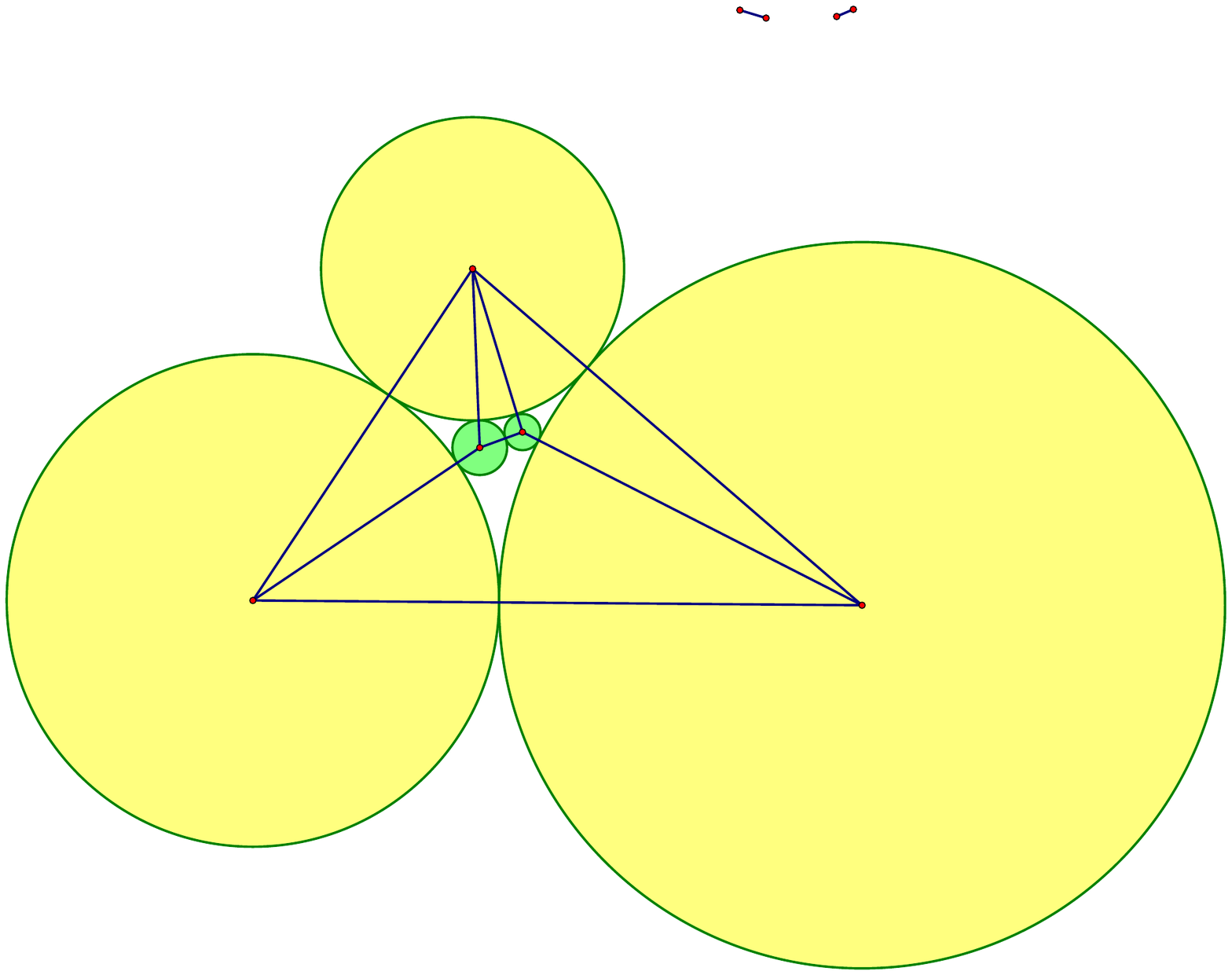}}
    \qquad\qquad
    \subfigure[]{\includegraphics[width=0.40\textwidth]{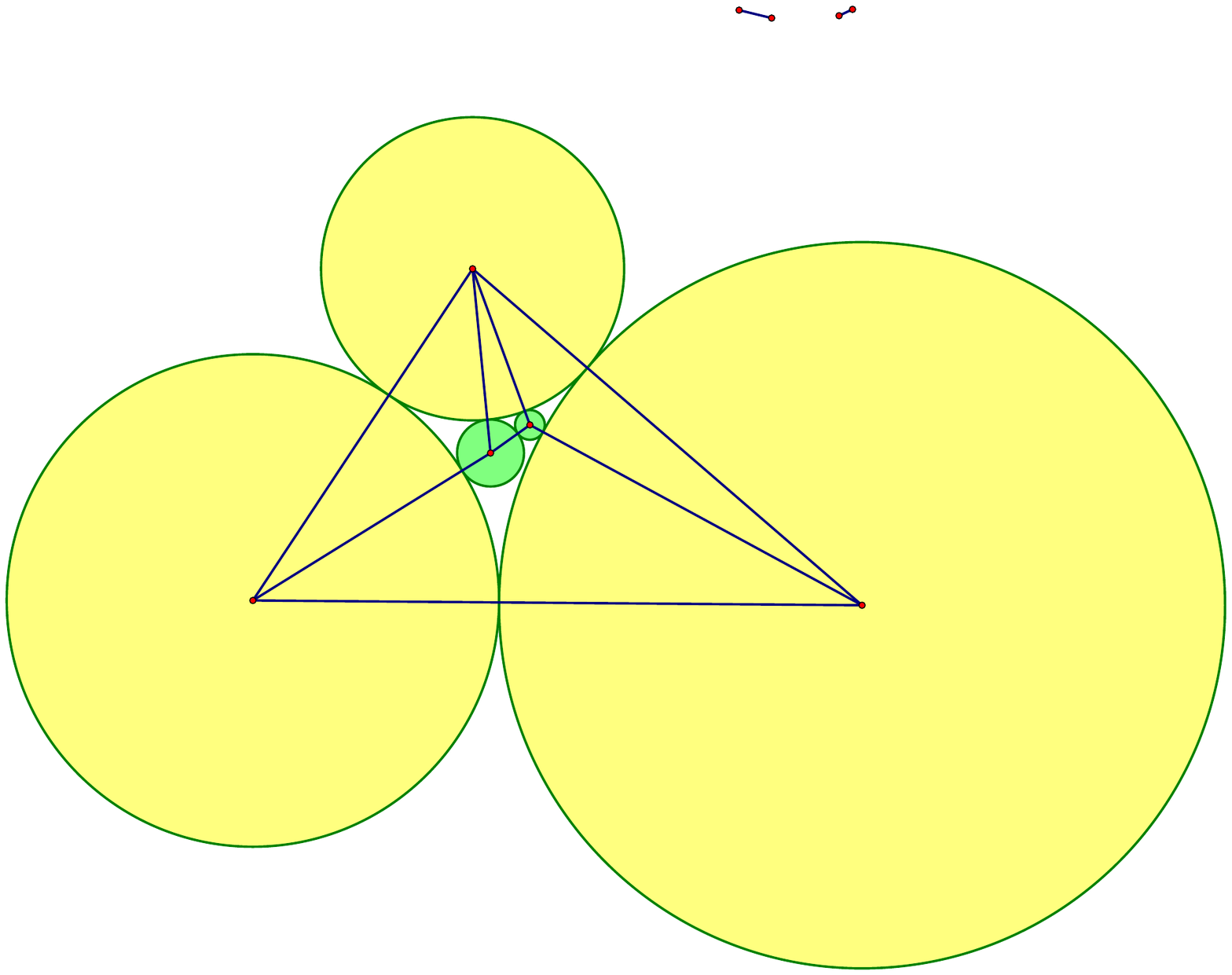}}
    \caption{
    (a) An infinitesimally collectively jammed packing in a tri-cusp
    with generic $\bar{\rr}$. The outer triangle consists of the three 
    large yellow disks, and there are $2$ green interior disks, so
    $n=5$.
    This packing has $8 = 2n-2$ contacts.
    (b)
    This packing in a tri-cusp also has generic $\bar{\rr}$ and $2n-2$ 
    contacts but it is not infinitesimally collectively jammed or even 
    maximally dense. The lower green disk can move slightly to the right and down, immediately breaking one contact,
    and then allow both green disks to expand.}
    \label{fig:nmd}
\end{figure}

\begin{proof}[Proof of Theorem \ref{cor:iso}]
Let us redefine $S_G$, in this section only,
to be the set of tri-cusp packings
with a fixed contact graph $G$.
As above (Proposition~\ref{prop:man}), $S_G$ (if not empty)
is smooth and of dimension
$3n-m$.

Let us redefine our projection $\pi$, in this section only, 
to be the map from $\RR^{3n}$
to $\RR^{n-1}$, that maps $(\p,\rr)$ to 
$\{r_1, r_2, r_3, 
\bar{r}_4,\ldots,\bar{r}_{n-1}\}$.  
With the new definition of $\pi$, the 
preimage  
$\pi^{-1}(\bar{\rr})$ in 
$S_G$ corresponds to packings with contact graph $G$,
where the outer three
disks are fixed (up to a Euclidean isometry)
and the internal radii ratios are fixed.

Next, 
we redefine a $\pi$-kernel vector at $(\p,\rr)\in S_G$
to be a tangent vector of $S_G$ that is in the kernel of the linearization of $\pi$.
Following the proof of  Lemma~\ref{lem:sard} above, except with an 
image of dimension $n-1$ instead of $n$, we see that 
the space of $\pi$-kernel vectors above a generic (and so regular) 
value in $\RR^{n-1}$ will be of dimension $2n-m+1$.  Since the 
space of $\pi$-kernel vectors above any point contains a $3$-dimensional 
subspace of trivial motions, we obtain $2n-m+1 \ge 3$.  Hence $m\le 2n - 2$,
giving us the first statement.

Let us now explore the implications of 
$m<2n-2$, which means that the
$\pi$-kernel is of dimension at least $ 4$.
We note for later that 
$\partial \bar{r}_j/\partial r_j = 1/r_n$
and
$\partial \bar{r}_j/\partial r_n = -r_j/r_n^2$.  Also, 
recall, from Equation \eqref{eq:stan}, that $(\p',\rr')$ 
is tangent to $S_G$ iff it satisfies 
\[
     (\p_j - \p_i)\cdot (\p'_j - \p'_i) =
    (r_j + r_i) (r'_j + r'_i).
\]
For a tangent vector $(\p',\rr')$ to be a $\pi$-kernel vector, it
must additionally satisify, for $j = 1,2,3$:
\[
    r'_j = 0 
\]
and, for $j = 4, \ldots, n$:
\begin{eqnarray}
\label{eq: piker scale-1}
 r'_j/r_n & = &  r'_n(r_j/r^2_n) \\
 \label{eq: piker scale-2}
 r'_j r_n & = & r'_n r_j 
\end{eqnarray}

Equations \eqref{eq: piker scale-1} and \eqref{eq: piker scale-2} imply that 
the $r'_j$ are either all non-positive or all non-negative.  After negating, 
if necessary, we conclude that, if $(\p',\rr')$ is a $\pi$-kernel vector,
then $\p'$ is an infinitesimal tensegrity flex.
(Note that the
converse is not true. An infinitesimal tensegrity flex vector $\p'$ does not
necessarily have a
corresponding  $\pi$-kernel vector. 
This ties into the discussion
of Section \ref{sec: no converse}, below.)

Thus if the $\pi$-kernel at $(\p,\rr)$ is of dimension
at least $4$,
then 
we will be able to find a non-trivial
infinitesimal tensegrity flex.  Hence, $P$ is not
infinitesimally collectively jammed.
\end{proof}
\begin{remark}
The lower bound aspect of Theorem~\ref{cor:iso},
namely that infinitesimal collective
jamming requires at least $2n-2$ 
contacts, is already established in~\cite{con88}. 
The results in \cite{con88} do not require genericity and
generalize to higher dimensions.
\end{remark}

\subsection{No converse}\label{sec: no converse}
There is a major difference between Theorems \ref{thm:main} and \ref{cor:iso} in 
that simply having generic radii and $2n-2$ contacts does not guarantee that a packing in a 
tri-cusp is infinitesimally collectively jammed or even locally maximally dense.
Figure \ref{fig:nmd}(b) shows an example.

Such examples show an essential difference between the inequalities 
in the jamming setup and the equalities defining the sticky behavior of 
sticky disks. In fact, if the disks in Figure \ref{fig:nmd}(b)
are forced to be sticky (but are still allowed to grow uniformly), then 
the example becomes rigid and infinitesimally rigid in 
an appropriate sense.

Interestingly, the papers \cite{PhysRevLett.114.135501,LESTER2018225,PhysRevLett.116.028301}
that partially motivated our interest in these problems 
use counting to analyze simulated jammed 
packings, essentially treating them as if the disks 
were sticky. 

\subsection{Relationship to the flat-torus isostatic theorem}
The isostatic theorem with a flat torus as the container
was proven in \cite{iso} using very general 
results of Guo \cite{G11} on circle patterns in 
piecewise-flat surfaces.  Such methods could also 
be applied to the tri-cusp setting.

It would be interesting to know if our methods can be extended 
to the torus.
Such an extension would require us to show that the corresponding $S_G$-set of 
$n$ disks on a a flat torus (with flexible metric and 
fixed affine structure) is smooth and of the expected 
codimension, $m$.  We don't know how to do that, and 
establishing the analogue of Lemma \ref{lem:noco}
for frameworks embedded in a torus
might be interesting in its own right.

\section{Open Problems}

\subsection{Existence}
Our paper proves certain properties about disk packings with generic 
radii. 
There certainly are classes of planar Laman graphs for which
we can build packings with generic radii. For a simple example, 
starting with a triangle, we can sequentially add on disks on
the exterior of the packing, each time using generic radii and
adding two contacts.
But importantly, we do not 
know about the existence of such packings 
for \emph{all} planar Laman graphs.
As in Remark~\ref{rem:kat},
we can see
that if $G$ is any planar graph,
then there must exist some disk packings with contact graph $G$.
But this reasoning does not
tell us about the genericity of the resulting radii.

\begin{question}\label{qu: ext}
Is the following claim true?
Let $G$ be planar 
and Laman. Then there is a packing $P$ with contact graph $G$
that has generic radii.
\end{question}
If there is a packing with generic radii,
then there will at least be an open ball of radii that can be used with $G$.

The generalization of Question \ref{qu: ext} to ball 
packings in three dimensions with contact graphs that 
are ``$3n - 6$ sparse'' 
(i.e., satisify the generalization of Maxwell's 
counting heuristic to dimension $3$)
appears to be false. The double banana 
graph~\cite[Figure 2]{CSS-nucleation} can
appear as the packing graph of $8$ balls, but it seems 
that such a packing will need carefully selected radii.
Of course the double banana is not a generically isostatic graph. 
So in 3D, we can weaken the  existence question 
and only consider contact graphs that are 
generically isostatic.

Here is an even stronger claim:
\begin{question}\label{qu: nearby}
Is the following claim true?
Let $P$ be any disk packing where its contact graph $G$ is planar 
and Laman. Then there is a nearby packing $P'$ with generic radii and the
same contact graph.
\end{question}
We can give a partial answer.
\begin{prop}\label{prop: inf rigid nearby}
Assuming that $P$ is infinitesimally rigid,  the answer to 
Question \ref{qu: nearby} is ``yes''.
\end{prop}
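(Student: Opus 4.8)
The plan is to show that, under the infinitesimal rigidity hypothesis, the radius projection $\pi : S_G \to \RR^n$ of Definition \ref{def: rad proj map} is a submersion at $P$, and then to exploit that a nonempty open subset of $\RR^n$ always contains a generic point.

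First I would record the dimension count. Since $G$ is Laman, $m = 2n-3$, so by Proposition \ref{prop:man} the set $S_G$ is a smooth semi-algebraic manifold of dimension $3n - m = n+3$, and it is nonempty because $P \in S_G$. Next I would compute the rank of $\diff\pi_P$. Its kernel is the space of $\pi$-kernel vectors at $P$. By the description of $T_P S_G$ as the kernel of the rank-$m$ Jacobian $M$ (Equation \eqref{eq:stan}) together with Lemma \ref{lem:piker}, this kernel is precisely the space of infinitesimal flexes of the underlying framework $(G,\p)$: a vector $(\p',0)$ lies in $\ker M$ exactly when $\p'$ is an infinitesimal flex, and conversely every infinitesimal flex $\p'$ gives a genuine tangent vector $(\p',0)$ to $S_G$, since $S_G$ is smooth with tangent space exactly $\ker M$. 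Because $P$ is infinitesimally rigid, this space is $3$-dimensional, so $\diff\pi_P$ has rank $(n+3) - 3 = n$ and is therefore surjective; that is, $P$ is a regular point of $\pi$.

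Then I would invoke the fact that a smooth map which is a submersion at a point is open near that point: for every neighborhood $W$ of $P$ in $S_G$, the image $\pi(W)$ contains an open neighborhood of $\rr = \pi(P)$ in $\RR^n$. Finally, any nonempty open set $V \subseteq \RR^n$ contains a generic point, because the non-generic vectors form a countable union $\bigcup_f \{f = 0\}$ of proper real-algebraic subsets, one for each nonzero $f \in \QQ[x_1,\dots,x_n]$, hence a Lebesgue-null (indeed meager) set whose complement is dense. Choosing a generic $\rr' \in \pi(W)$ and any $P' \in W$ with $\pi(P') = \rr'$ produces a packing $P'$ that lies in $S_G$ — so its contact graph is exactly $G$ — is arbitrarily close to $P$, and has generic radii, as required.

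The computations here are routine; the only point that needs care is the identification of $\ker \diff\pi_P$ with the infinitesimal flex space of $(G,\p)$, which makes transparent that infinitesimal rigidity of $P$ is exactly what forces $\pi$ to be a submersion at $P$ once the Laman count pins $\dim S_G$ to $n+3$. I do not expect a genuine obstacle in this case — which is precisely why the proposition settles only the infinitesimally rigid case and leaves the general Question \ref{qu: nearby} (where $P$ may be flexible, so $P$ need not be a regular point of $\pi$ and $\rr$ need not lie in the interior of the image) open.
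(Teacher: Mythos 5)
Your argument is correct, and it reaches the conclusion by a genuinely different route from the paper. You work directly with the packing manifold: Proposition~\ref{prop:man} gives $\dim S_G = n+3$ for a Laman contact graph, Lemma~\ref{lem:piker} (together with the identification $T_P S_G = \kernel M$) identifies $\kernel \diff\pi_P$ with the infinitesimal flex space of $(G,\p)$, and infinitesimal rigidity then pins that kernel to dimension $3$, so $\diff\pi_P$ has rank $n$ and $\pi$ is an open map near $P$; a generic $\rr'$ in the image of a small neighborhood finishes the job. The paper instead never touches $S_G$ here: it applies the constant rank theorem to the classical edge-length map on configurations (infinitesimal rigidity of a Laman framework makes that map a submersion at $\p$, so the achievable lengths fill a neighborhood $N$ of $\bl$ in $\RR^m$), composes with the linear map $L(\rr)_{ij} = r_i + r_j$, and then must separately argue that for $\rr'$ close enough to $\rr$ the realizing configuration $\p'$ yields an honest packing with contact graph exactly $G$ (no new contacts or overlaps). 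Your version gets that last point for free, since the perturbed point is produced inside $S_G$ from the start, at the cost of invoking the heavier machinery of Section~\ref{sec: packing manifold} (ultimately the stress lemma behind Proposition~\ref{prop:man}), whereas the paper's sketch is logically independent of that machinery. Both proofs locate the same obstruction in the flexible case: without infinitesimal rigidity, $P$ need not be a regular point of the relevant map, so the image need not contain a neighborhood of $\rr$.
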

\begin{proof}[Proof sketch]
If $(G,\p)$ is an infinitesimally rigid framework of a Laman graph, 
then the vector $\bl$ of $m$ edge lengths of $(G,\p)$ is a regular point of the 
map that measures edge lengths.  The constant rank theorem (see, e.g., 
\cite[Theorem 9.32]{rudin}) then implies that there is a neighborhood $N$ of 
$\bl$ in $\RR^m$ consisting of edge length measurements arising
from frameworks close to  $(G,\p)$.

Next let $L$ be the linear map from $\RR^n$ to $\RR^m$ defined by 
$L_{ij}(\rr) := (r_i+r_j)$, where $ij$ ranges over the edges of $G$.
The image of $L$ is a linear space.
(Restricted to packing with contact graph $G$, the map $L$ measures the edge lengths of 
pairs of disks in contact, but we want the more general setting.)

By assumption, $P = (\p,\rr)$ is infinitesimally rigid.  
Hence, 
$N$ and $\bl$ as in the first paragraph are defined for 
the underlying framework $(G,\p)$.
Since $L(\rr) = \bl$, the image of $L$ intersects the interior of $N$.  Call this intersection $N'$.

For a sufficiently small perturbation $\rr'$ of $\rr$, we have 
$L(\rr')$ in $N'$.  This means there is a $\p'$ close to $\p$ 
so that $(G,\p')$ has edge lengths $L(\rr')$.  By picking $\rr'$
close enough to $\rr$, $\p'$ can be made close enough to $\p$ 
to guarantee that $(\p',\rr')$ is a packing with contact graph $G$
(i.e., with no new contacts or disk overlaps).

Since any neighborhood of $\rr$ contains a generic $\rr'$, we
obtain a nearby packing with generic radii.
\end{proof}
\begin{remark}
If $P$ is not infinitesimally rigid, the proof of Proposition  
\ref{prop: inf rigid nearby} above fails 
at the first step.  Without infinitesimal rigidity, the neighborhood 
$N$ may not exist, in which case the image of $L$ could be tangent 
to the set of achievable lengths at $\bl$.
\end{remark}

\begin{figure}[htbp]
    \centering
    \includegraphics[width=0.3\textwidth]
    {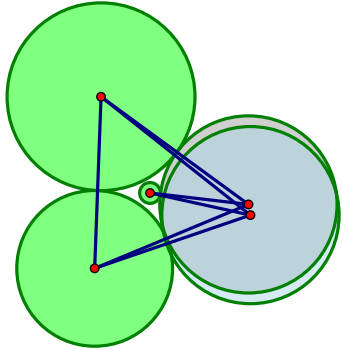}
    \caption{This disk configuration has two identical disks (separated slightly
    for visualization. In any nearby configuration with the same contact
    graph, the two gray disks must be identical. 
    Notably, this is not an example of a packing.}
    \label{fig:overlaps}
\end{figure}

The answer to Question \ref{qu: nearby} is ``no'', 
if we relax the packing-overlap constraint, 
as shown
in Figure~\ref{fig:overlaps}, even though all the disk contacts are external.

\subsection{Removing Inequalities}
The results of this paper rely on the fact that the underlying
framework is a planar embedding, which is guaranteed by the packing
inequalities.

In the \defn{algebraic setting}, we 
ignore the packing inequalities and enforce only 
the 
equality constraints $|\p_i-\p_j|^2=(r_i+r_j)^2$ on the edges of
our graph $G$. The analogous object to $S_G$ in the algebraic setting is an
algebraic variety (as opposed to a semi-algebraic set).

\begin{question}
In the algebraic setting, do the results of Theorem~\ref{thm:main} still hold?
\end{question}

\subsection{Three Dimensions}

The topics of this paper can be considered in three dimensions, where
disks are replaced with balls. The natural target contact number would
then become $3n-6$.

\begin{question}\label{qu: 3d}
Do the results of Theorem~\ref{thm:main} generalize to three dimensions?
\end{question}
The authors of \cite{meera-EASAL} conjecture that Question \ref{qu: 3d}
has a positive answer.

Interestingly, it conceivable that the Maxwell counting heuristic 
is sufficient for generic rigidity for generic radius ball packing. 
Maxwell counting is not sufficient for bar frameworks, with
that pesky double banana as a counter example. But it appears that the
double banana cannot appear as the contact graph of a ball packing with 
generic radii.

\section*{Acknowledgements}
We thank Meera Sitharam and Wai Yeung Lam for helpful feedback on 
an earlier version of this paper. The anonymous referees made 
a number of suggestions that improved the exposition.

\end{document}